\title{Characteristic subobjects in semi-abelian categories}
\author{Alan S. Cigoli and Andrea Montoli}
\newtheorem{teo}{Theorem}[section]
\newtheorem{cor}[teo]{Corollary}
\newtheorem{lemma}[teo]{Lemma}
\newtheorem{defi}[teo]{Definition}
\newtheorem{prop}[teo]{Proposition}
\theoremstyle{definition}\newtheorem{example}[teo]{Example}
\newtheorem{remark}[teo]{Remark}
\DeclareMathOperator{\Ker}{Ker}
\DeclareMathOperator{\Coker}{Coker}
\DeclareMathOperator{\coker}{coker}
\DeclareMathOperator{\Act}{Act} \DeclareMathOperator{\Aut}{Aut}
\DeclareMathOperator{\Der}{Der}
\newcommand{\C}{\ensuremath{\mathcal{C}}}
\newcommand{\Z}{\ensuremath{\mathbb{Z}}}
\newcommand{\car}{\ensuremath{\underset{\mathrm{char}}{<}}}
\newcommand{\Gp}{\ensuremath{\mathsf{Gp}}}
\newcommand{\Lie}{\ensuremath{\mathsf{Lie}}}
\newcommand{\NARng}{\ensuremath{\mathsf{NARng}}}
\newcommand{\Pt}{\ensuremath{\mathsf{Pt}}}
\newcommand{\Rng}{\ensuremath{\mathsf{Rng}}}
\newcommand{\LACC}{{\rm (LACC)}}
\newcommand{\NH}{{\rm (NH)}}
\def\skewpullback{%
 \ar@{-}[]+LD+<-6pt,-6pt>;[]+LDD+<-6pt,-15.5pt>%
 \ar@{-}[]+D+<-1pt,-6pt>;[]+LDD+<-6pt,-15.5pt>}
\begin{document}

\maketitle

\begin{abstract}
We extend to semi-abelian categories the notion of characteristic
subobject, which is widely used in group theory and in the theory
of Lie algebras. Moreover, we show that many of the classical
properties of characteristic subgroups of a group hold in the
general semi-abelian context, or in stronger ones.
\end{abstract}


\section{Introduction}

The notion of characteristic subgroup (which means a subgroup that
is invariant under all automorphisms of the bigger group) is
widely used in group theory. Examples of characteristic subgroups
are the centre and the derived subgroup of any group. The main
properties of characteristic subgroups are the following: if $H$
is a characteristic subgroup of $K$ and $K$ is a characteristic
subgroup of $G$, then $H$ is a characteristic subgroup of $G$;
moreover, if $H$ is characteristic in $K$ and $K$ is normal in
$G$, then $H$ is normal in $G$. These transitivity properties of
characteristic subgroups imply, for example, that the derived
series and the central series of a group are normal series, and
this fact is very useful in order to deal with solvable and
nilpotent groups.
\medskip

An analogous notion exists for Lie algebras (over a commutative
ring $R$): a characteristic ideal of a Lie algebra is a subalgebra
which is invariant under all derivations of the bigger one. The
two transitivity properties mentioned above hold also in this
context, and again this allows to easily describe solvable and
nilpotent Lie algebras.
\medskip

The strong parallelism between these two contexts is explained by
the fact that automorphisms represent group actions, as well as
derivations represent actions of Lie algebras in the following
sense. An action of a group $B$ on a group $G$ can be described
simply as a group homomorphism $B \to \Aut(G)$; in the same way,
an action of a Lie algebra $B$ on a Lie algebra $G$ is a
homomorphism of Lie algebras $B \to \Der(G)$.
\medskip

The aim of this paper is to extend the definition and the main
properties of characteristic subobjects to a categorical context.
In order to do this, we will use the notion of \emph{internal
action} introduced in \cite{BJK}. In
\cite{Bourn-Janelidze:Semidirect} it is proved that, in
semi-abelian categories \cite{Janelidze-Marki-Tholen}, internal
actions are equivalent to split extensions, via a semidirect
product construction which generalises the classical one known for
groups. Examples of semi-abelian categories are groups, rings,
associative algebras, Lie algebras and, in general, any variety of
$\Omega$-groups.
\medskip

We define a characteristic subobject as a subobject $H$ of an
object $G$ which is invariant under all (internal) actions over
$G$. In the semi-abelian context, we can use the equivalence
between actions and split extensions mentioned above in order to
deduce properties of characteristic subobjects from properties of
the kernel functor which associates with any split epimorphism its
kernel.
\medskip

The paper is organized as follows: in Section \ref{sec:defi} we
give the definition of characteristic subobject and we prove some
properties that hold in any semi-abelian category, like the
transitivity properties mentioned at the beginning, or the fact
that the intersection of characteristic subobjects is
characteristic. Then we study properties that hold in stronger
contexts, such as:
\begin{itemize}
\item[-] the join of two characteristic subobjects is
characteristic (Section \ref{sec:joins});

\item[-] the commutator of two characteristic subobjects is
characteristic (Section \ref{sec:comm});

\item[-] the centraliser of a characteristic subobject is
characteristic (Section \ref{sec:centr}).
\end{itemize}
Some properties about actors of characteristic subobjects are
studied in Section \ref{sec:actors} in the context of action
representative categories \cite{BJK2,Borceux-Bourn-SEC} and
analogous results are proved in action accessible categories
\cite{BJ07}, replacing actors with suitable objects.


\section{Definition and basic properties} \label{sec:defi}

A characteristic subgroup of a group $G$ is classically defined as
a subgroup $H$ of $G$ which is invariant under all the
automorphisms of $G$. This means that any automorphism of $G$
restricts to an automorphism of $H$. Since the automorphism group
$\Aut(G)$ of a group $G$ classifies all the group actions on $G$,
a subgroup $H$ of a group $G$ is characteristic if and only if any
group action on $G$ restricts to an action on $H$.

In other algebraic contexts it is no longer true that
automorphisms classify actions, hence the notions of invariance
under automorphisms and under actions are different. As already
explained in the introduction, here we are interested in the
latter. In order to study it in a categorical setting, we are
going to use the notion of \emph{internal action}, introduced in
\cite{BJK}. We briefly recall the definition.
\medskip

Let $\mathcal{C}$ be a pointed category with finite limits and
finite coproducts. For any object $B$ in $\mathcal{C}$, we can
define the category $\Pt_B(\C)$ of \emph{points} over $B$, whose
objects are split epimorphisms $(A, p, s)$ with codomain $B$ and
whose arrows are commutative triangles of the following form, with
$p'f=p$ and $fs=s'$:
$$ \xymatrix{
    A \ar[rr]^f \ar@<.5ex>[dr]^p & & A' \ar@<.5ex>[dl]^{p'} \\
    & B \ar@<.5ex>[ul]^s \ar@<.5ex>[ur]^{s'}
} $$ We then get the two following functors:
$$ \Ker_B \colon \Pt_B(\C) \to \C \,, $$
given by $\Ker_B(A, p, s) = \Ker \, p$, and
$$ B + (-) \colon \C \to \Pt_B(\C) \,, $$
where $B + (X)$ is the point $\xymatrix{B + X
\ar@<.5ex>[r]^-{[1,0]} & B \ar@<.5ex>[l]^-{\iota_B}}$.

These functors give rise to an adjunction. The corresponding monad on
$\C$ is denoted by $B \flat (-) $. For any object $X \in \C$, we
have that $B \flat X$ is the kernel of the morphism $[1, 0] \colon
B + X \to B$. The algebras for this monad are called
\emph{internal actions}. The comparison functor associates with
every point $(A,p,s)$ an action $\xi$ as described in the
following diagram (where $X$ is the kernel of $p$ and $\xi$ is
induced by the universal property of $X$):
$$ \xymatrix{
    B\flat X \ar [r]^-{\ker[1,0]} \ar[d]_\xi & B+X \ar @<3pt> [r]^-{[1,0]} \ar [d]^{[s,k]}
        & B \ar @<3pt> [l]^-{\iota_{_B}} \ar @{=} [d] \\
    X \ar [r]^k & A \ar @<3pt> [r]^p & B \ar @<3pt> [l]^s
} $$ When \C\ is the category \Gp\ of groups, the elements of
$B\flat X$ are generated by formal sequences of type
$(b;x;b^{-1})$ with $b\in B$ and $x\in X$, and the internal action
$\xi$ is nothing but the realisation of these sequences in $X$,
that is $\xi(b;x;b^{-1})=bxb^{-1}$, or more properly
$\xi(b;x;b^{-1})=k^{-1}(s(b)k(x)s(b^{-1}))$ since the product is
actually computed in $A$.

Vice versa, given a group action $\xi$
of $B$ over $K$, we can always associate with it the semidirect
product $K\rtimes_\xi B$ and a point as in the following diagram
where the left hand side square is constructed as a pushout:
$$ \xymatrix{
    B\flat X \ar [r]^-{\ker[1,0]} \ar[d]_\xi & B+X \ar@<3pt>[r]^-{[1,0]} \ar[d]
        & B \ar @<3pt> [l]^-{\iota_{_B}} \ar @{=} [d] \\
    X \ar[r]_-{i_X} & X \rtimes_\xi B \ar@<3pt>[r]^-{p_B} & B \ar@<3pt>[l]^-{i_B}
} $$ We can repeat the same construction in the categorical
context mentioned above. However, in general, the bottom row is
not always a split short exact sequence. This is the case when the
comparison functor is an equivalence, as, for example, in any
semi-abelian category \cite{Janelidze-Marki-Tholen,Borceux-Bourn},
as shown in \cite{Bourn-Janelidze:Semidirect}, where the
categorical notion of semi-direct product is introduced.
\medskip

We are now ready to give the following definition:

\begin{defi} \label{defi:char.sub}
Let \C\ be a pointed category with finite limits and finite
coproducts. Let $G$ be an object in \C\ and $h \colon H
\rightarrowtail G$ a subobject. We say that $H$ is
\emph{characteristic} in $G$, and we write $H \car G$, if, for all
pairs $(B,\xi)$, with $B$ an object of \C\ and $\xi$ an internal
action of $B$ on $G$, the action $\xi$ restricts to the subobject
$H$. In other words, there exists a (unique) action
$\overline{\xi}$ of $B$ on $H$ which makes the following diagram
commute:
$$ \xymatrix{
    B \flat H \ar[d]_{\overline{\xi}} \ar[r]^{1 \flat h} & B \flat G \ar[d]^{\xi} \\
    H \ar@{ >->}[r]_h & G
} $$
\end{defi}

When \C\ is a semi-abelian category, the above mentioned
equivalence between actions and points allows us to reformulate
the definition of characteristic subobject.

\begin{prop}
Let \C\ be a semi-abelian category. A subobject $h \colon H
\rightarrowtail G$ is characteristic in $G$ if and only if for
every split extension of kernel $G$
$$ \xymatrix{G \ar@{ |>->}[r] & X \ar@<.5ex>[r] & B \ar@<.5ex>[l]} $$
there exist a split extension $\xymatrix{H \ar@{ |>->}[r] & Y
\ar[r] & B}$ and a morphism of split extensions inducing $h$ on
kernels and $1_B$ on cokernels (it is necessarily a monomorphism
thanks to the split short five lemma):
$$ \xymatrix{
    H \ar@{ |>->}[r] \ar@{ >->}[d]_h & Y \ar@<.5ex>[r] \ar@{ >->}[d] & B \ar@<.5ex>[l] \ar@{=}[d] \\
    G \ar@{ |>->}[r] & X \ar@<.5ex>[r] & B \ar@<.5ex>[l]
} $$
\end{prop}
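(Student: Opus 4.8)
The plan is to read the statement off directly from Definition~\ref{defi:char.sub}, using the equivalence between internal $B$-actions and points over $B$ that holds in every semi-abelian category \cite{Bourn-Janelidze:Semidirect}. Recall that under this equivalence the category of internal $B$-actions is identified with $\Pt_B(\C)$ in such a way that $\Ker_B$ becomes the forgetful functor sending a $B$-action to its underlying object, the semidirect product providing a quasi-inverse. In particular, a split extension with kernel $G$ and base $B$ amounts to an internal action $\xi$ of $B$ on $G$, and a morphism of such split extensions inducing $1_B$ on cokernels amounts to a morphism of the corresponding internal actions whose underlying map is the one induced on kernels.

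First I would treat the implication from left to right. Assume $H\car G$ and fix a split extension $\xymatrix{G \ar@{ |>->}[r] & X \ar@<.5ex>[r] & B \ar@<.5ex>[l]}$; let $\xi\colon B\flat G\to G$ be the associated internal action. By Definition~\ref{defi:char.sub} there is a (unique) action $\overline{\xi}\colon B\flat H\to H$ with $h\circ\overline{\xi}=\xi\circ(1\flat h)$, and this equality says exactly that we have a morphism of internal actions from $\overline{\xi}$ to $\xi$ with underlying map $h$. Transporting it through the equivalence, the semidirect product of $\overline{\xi}$ yields a split extension $\xymatrix{H \ar@{ |>->}[r] & Y \ar[r] & B}$ together with a morphism of split extensions $Y\to X$ over $B$ which induces $h$ on kernels; by the split short five lemma this morphism is automatically a monomorphism, in accordance with the parenthetical remark in the statement.

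For the converse, assume the restriction property displayed in the statement and take any pair $(B,\xi)$ with $\xi$ an internal action of $B$ on $G$. Forming the semidirect product yields a split extension with kernel $G$ and base $B$ whose associated internal action is (isomorphic to) $\xi$; by hypothesis there are a split extension with kernel $H$ and base $B$ and a morphism of split extensions between them inducing $h$ on kernels and $1_B$ on cokernels. Pushing this morphism of points back through the equivalence produces a morphism of internal actions with underlying map $h$, that is, an action $\overline{\xi}$ of $B$ on $H$ with $h\circ\overline{\xi}=\xi\circ(1\flat h)$; this is precisely the condition that $\xi$ restricts to the subobject $H$, so $H\car G$.

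I do not expect any serious obstacle: once the equivalence of \cite{Bourn-Janelidze:Semidirect} is in hand, the argument is pure bookkeeping. The two points deserving a little care are, on the one hand, the uniqueness of $\overline{\xi}$, which is immediate since $h$ is a monomorphism and hence $h\circ\overline{\xi}=\xi\circ(1\flat h)$ determines $\overline{\xi}$; and, on the other hand, the verification that ``the action $\xi$ restricts to the subobject $H$'' in the sense of Definition~\ref{defi:char.sub} corresponds, term by term, to ``a morphism of split extensions inducing $h$ on kernels and $1_B$ on cokernels''. This last correspondence is exactly what the equivalence between actions and points delivers, so the bulk of the proof consists in naming the relevant functors and invoking it in both directions.
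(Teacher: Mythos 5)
Your argument is correct and is exactly the route the paper intends: the proposition is stated there as an immediate reformulation via the Bourn--Janelidze equivalence between internal $B$-actions and points in $\Pt_B(\C)$, which is precisely what you spell out in both directions (semidirect product one way, the comparison the other, with the split short five lemma for the parenthetical monomorphism claim). No discrepancies to report.
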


As we will see afterwards, this reformulation makes the notion of
characteristic subobject much easier to handle. Moreover, the
translation in terms of points reveals that, when actions are
equivalent to points, many properties of characteristic subobjects
are strictly related with the properties of the fibration of
points (see \cite{Borceux-Bourn}) or, to be more precise, of the
kernel functors:
$$ \Ker_B \colon \Pt_B(\C) \rightarrow \C $$
For these reasons, in our investigation, we will focus on contexts
which are at least semi-abelian, possibly with additional
requirements. The behaviour of characteristic subobjects in weaker
contexts is material for future work.

\begin{prop} \label{prop:char.lifts}
If $H$ is a characteristic subobject of $K$, and $K$ is a
characteristic subobject of $G$, then $H$ is characteristic in
$G$.
\end{prop}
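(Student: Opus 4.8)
The plan is to work directly from Definition~\ref{defi:char.sub}: given an arbitrary object $B$ and an internal action $\xi$ of $B$ on $G$, I must produce an action $\overline{\overline{\xi}}$ of $B$ on $H$ making the square with $1\flat (hk\text{-style composite})$ commute, where $h\colon H\rightarrowtail K$ and $k\colon K\rightarrowtail G$ (abusing notation, let me call the two monos $H\xrightarrow{h}K\xrightarrow{k}G$). First I would use the hypothesis that $K$ is characteristic in $G$: applied to the action $\xi$ of $B$ on $G$, it yields a (unique) action $\xi'$ of $B$ on $K$ with $k\circ\xi' = \xi\circ(1\flat k)$. Then I would use the hypothesis that $H$ is characteristic in $K$: applied now to the action $\xi'$ of $B$ on $K$, it yields a (unique) action $\xi''$ of $B$ on $H$ with $h\circ\xi'' = \xi'\circ(1\flat h)$.

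Next I would paste the two commutative squares side by side vertically (the $H$-over-$K$ square on top of the $K$-over-$G$ square) and observe that the outer rectangle is exactly the square required by Definition~\ref{defi:char.sub} for $H$ as a subobject of $G$. Concretely, $k\circ h\circ\xi'' = k\circ\xi'\circ(1\flat h) = \xi\circ(1\flat k)\circ(1\flat h) = \xi\circ\bigl(1\flat(k h)\bigr)$, where the last equality is functoriality of $B\flat(-)$ applied to the composite $kh\colon H\rightarrowtail G$. This shows $\xi''$ is the desired restriction of $\xi$ to $H$, so $H$ is characteristic in $G$.

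One point that deserves a word is that the restricted maps $\xi'$ and $\xi''$ are genuine internal actions (i.e.\ algebras for the respective monads $B\flat(-)$), not merely arrows making a square commute. This is guaranteed by Definition~\ref{defi:char.sub} itself: the definition of characteristic subobject asserts the existence of an \emph{action} $\overline{\xi}$, so $\xi'$ and $\xi''$ come equipped with the algebra structure for free, and there is nothing further to check. Likewise the uniqueness clause in the definition is not needed for this argument, though it does tell us the restriction is canonical.

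The proof is essentially formal, so I do not expect a real obstacle; the only mild subtlety is the bookkeeping with the functoriality identity $(1\flat k)\circ(1\flat h) = 1\flat(kh)$ and making sure the two applications of the characteristic hypothesis are invoked in the correct order (first to $G$ to descend to $K$, then to $K$ to descend to $H$). If one prefers, the same argument can be phrased in the language of the preceding Proposition in terms of split extensions: a split extension with kernel $G$ restricts along $K\car G$ to one with kernel $K$, which then restricts along $H\car K$ to one with kernel $H$, and composing the two monomorphisms of split extensions gives the required morphism over $1_B$.
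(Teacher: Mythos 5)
Your argument is exactly the "straightforward consequence of Definition~\ref{defi:char.sub}" that the paper invokes without spelling out: restrict $\xi$ along $K\car G$, restrict again along $H\car K$, and paste the two squares using $(1\flat k)(1\flat h)=1\flat(kh)$. The proof is correct and takes the same approach as the paper.
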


\begin{proof}
The result is a straightforward consequence of Definition
\ref{defi:char.sub}.
\end{proof}

\begin{prop} \label{prop:normality.lifts}
If $H$ is a characteristic subobject of $K$, and $K$ is a normal
subobject of $G$, then $H$ is normal in $G$.
\end{prop}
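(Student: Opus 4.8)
The plan is to follow, in categorical form, the classical argument for groups: if $K\triangleleft G$, conjugation by an element of $G$ restricts to an automorphism of $K$, which — since $H$ is characteristic in $K$ — restricts further to $H$, so $H$ is stable under all inner automorphisms of $G$, i.e.\ $H\triangleleft G$. The categorical substitute for conjugation is the canonical action $\chi_G\colon G\flat G\to G$ of $G$ on itself, namely the internal action associated, through the comparison functor, with the point $(G\times G,\,p_2,\,\Delta_G)$, whose kernel is $\langle 1_G,0\rangle\colon G\rightarrowtail G\times G$. The one external ingredient I would invoke is the standard description of normality in these terms (see \cite{Borceux-Bourn,Bourn-Janelidze:Semidirect}): a subobject $k\colon K\rightarrowtail G$ is normal in $G$ if and only if $\chi_G$ \emph{restricts} to $K$, i.e.\ there is a (necessarily unique) action $\chi_K\colon G\flat K\to K$ with $k\cdot\chi_K=\chi_G\cdot(1\flat k)$; equivalently, in terms of split extensions, there is a morphism of split extensions from some split extension with kernel $K$ over $G$ to
$$ \xymatrix{ G \ar@{ |>->}[r]^-{\langle 1_G,0\rangle} & G \times G \ar@<.5ex>[r]^-{p_2} & G \ar@<.5ex>[l]_-{\Delta_G} } $$
inducing $k$ on kernels and $1_G$ on cokernels.

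Granting this, the proof becomes a purely formal pasting of squares, parallel to that of Proposition \ref{prop:char.lifts}. First, since $K$ is normal in $G$ I get the action $\chi_K\colon G\flat K\to K$ as above; this is in particular an internal action of $G$ on $K$, so, because $H\car K$, Definition \ref{defi:char.sub} (with $B=G$ and $\xi=\chi_K$) supplies a unique action $\overline{\chi_K}\colon G\flat H\to H$ of $G$ on $H$ with $h\cdot\overline{\chi_K}=\chi_K\cdot(1\flat h)$, where $h\colon H\rightarrowtail K$. Writing $h'=kh\colon H\rightarrowtail G$ for $H$ viewed as a subobject of $G$, functoriality of $G\flat(-)$ gives $(1\flat k)\cdot(1\flat h)=1\flat h'$, whence
$$ h'\cdot\overline{\chi_K}=k\cdot\chi_K\cdot(1\flat h)=\chi_G\cdot(1\flat k)\cdot(1\flat h)=\chi_G\cdot(1\flat h'). $$
Thus $\chi_G$ restricts along $h'$ to the action $\overline{\chi_K}$ of $G$ on $H$, and by the converse part of the normality criterion $H$ is normal in $G$.

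The only step that is not completely routine is the normality criterion itself, which is used in both directions: from normality of $K$ to the restriction of $\chi_G$ to $K$, to set things up, and from the restriction of $\chi_G$ to $H$ back to normality of $H$, to conclude. Accordingly I would either cite it from \cite{Borceux-Bourn} or, for a self-contained account, prove it first from the semi-abelian calculus of internal actions together with the equivalence between actions and points. After that there is no real obstacle left: the statement follows simply by composing the square saying that $\chi_G$ restricts to $K$ with the square — furnished by the definition of characteristic subobject — saying that $\chi_K$ restricts to $H$, in the same spirit as the one-line argument for Proposition \ref{prop:char.lifts}.
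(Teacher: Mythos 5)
Your proof is correct and takes essentially the same route as the paper: both rest on the identification, in the semi-abelian context, of normal subobjects with clots (subobjects closed under the conjugation action), and then compose the restriction of the conjugation action of $G$ on itself to $K$ (given by normality of $K$) with its further restriction to $H$ (given by $H \car K$). The paper's proof is simply a condensed version of this pasting argument, citing \cite{MaMe10-2} for the clot characterisation where you cite \cite{Borceux-Bourn}.
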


\begin{proof}
It suffices to observe that, in the semi-abelian context, normal
subobjects are exactly those closed under the conjugation action
(i.e.\ clots, see for example \cite{MaMe10-2}). Indeed, the
conjugation action of $G$ on itself restricts to $K$ by normality,
and then to $H$, since $H \car K$, thus proving that $H
\triangleleft G$.
\end{proof}

\begin{cor} \label{cor:char=>norm}
If $H$ is a characteristic subobject of $G$, then $H$ is normal in
$G$.
\end{cor}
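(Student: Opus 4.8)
The plan is to obtain this as an immediate special case of Proposition~\ref{prop:normality.lifts}, so essentially no new work is needed. The key observation is simply that, working as before in a semi-abelian category, every object is a normal subobject of itself: the identity morphism $1_G \colon G \rightarrowtail G$ is a normal monomorphism (it is $\ker(G \to 0)$), so $G \triangleleft G$.

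With this in hand, I would instantiate Proposition~\ref{prop:normality.lifts} at $K = G$. Its two hypotheses then read ``$H$ is a characteristic subobject of $G$'' and ``$G$ is a normal subobject of $G$''; the first holds by assumption and the second by the remark above. The conclusion of the proposition is precisely that $H$ is normal in $G$, which is what we want.

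Equivalently, one can just reproduce the argument of Proposition~\ref{prop:normality.lifts} in this degenerate situation: since $H \car G$, the conjugation action of $G$ on itself restricts to $H$; hence $H$ is closed under conjugation, i.e.\ a clot, and in the semi-abelian context clots coincide with normal subobjects, so $H \triangleleft G$.

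I do not expect any genuine obstacle here. The only step that even calls for a comment is the triviality that $G$ is normal in itself, and all the substance of the statement is already carried by Proposition~\ref{prop:normality.lifts}; the corollary is really just the specialisation $K = G$.
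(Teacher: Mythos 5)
Your proof is correct and matches the paper's (implicit) argument: the corollary is stated right after Proposition~\ref{prop:normality.lifts} precisely as its specialisation to $K=G$, using that $G$ is normal in itself. Nothing further is needed.
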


\begin{prop} \label{prop:meet.char}
If $\{H_i\}_{i\in I}$ is a finite family of
characteristic subobjects of $G$, then the intersection
$\bigwedge_{i\in I} H_i$ is characteristic in $G$.
\end{prop}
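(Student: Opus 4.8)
The plan is to reduce the statement to a property of the kernel functors $\Ker_B \colon \Pt_B(\C) \to \C$, using the reformulation of characteristicity in terms of split extensions. First I would recall that, since $\C$ is semi-abelian, for every index $i$ the hypothesis $H_i \car G$ means that any split extension $\xymatrix{G \ar@{ |>->}[r] & X \ar@<.5ex>[r] & B \ar@<.5ex>[l]}$ admits a morphism of split extensions inducing $h_i \colon H_i \rightarrowtail G$ on kernels and $1_B$ on cokernels; call the resulting middle object $Y_i$, equipped with a monomorphism $y_i \colon Y_i \rightarrowtail X$ over $B$. The goal is to produce, for the subobject $\bigwedge_{i\in I} H_i \rightarrowtail G$, a split extension over $B$ with a monomorphism into $X$ inducing the canonical inclusion on kernels.

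The key step is to take the intersection $\bigwedge_{i\in I} Y_i$ inside $X$ (a finite limit, hence available), together with its composite monomorphism into $X$. One then checks that this subobject of $X$ is a point over $B$: the splitting $s \colon B \to X$ factors through each $Y_i$ (since each $y_i$ is a morphism of points, so $s = y_i s_i$, forcing $s$ to land in the intersection), so the restriction of the split epimorphism $X \to B$ to $\bigwedge_i Y_i$ is again a split epimorphism. It remains to compute its kernel: since kernels commute with finite intersections (pullbacks of monomorphisms along a fixed morphism compose as intersections, and limits commute with limits), the kernel of $\bigwedge_i Y_i \to B$ is $\bigwedge_{i\in I}\Ker(Y_i \to B) = \bigwedge_{i\in I} H_i$, with the correct induced monomorphism into $\Ker(X\to B) = G$. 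This exhibits the required morphism of split extensions, so by the reformulating proposition $\bigwedge_{i\in I} H_i \car G$.

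The main obstacle — really the only delicate point — is verifying that the intersection $\bigwedge_{i\in I} Y_i$, computed as subobjects of $X$, genuinely has kernel (over $B$) equal to $\bigwedge_{i\in I} H_i$ computed as subobjects of $G$, with compatible inclusions; this rests on the fact that the forgetful functor $\Pt_B(\C) \to \C$ preserves limits and that $\Ker_B$ does too, so that intersections in $\Pt_B(\C)$ are computed as in $\C$ and are sent to intersections by $\Ker_B$. I would also note explicitly why finiteness of $I$ is used: we only need finite limits in $\C$, which is all that is assumed. With these points in place the argument is complete; an entirely analogous argument, working directly with the internal-action reformulation of Definition~\ref{defi:char.sub} and the fact that $B\flat(-)$ preserves the relevant limits, handles the general pointed case if one wishes to avoid the semi-abelian reformulation.
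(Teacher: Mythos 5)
Your proof is correct and follows essentially the same route as the paper: for every action of $B$ on $G$ you intersect the points $(Y_i,p_i,s_i)$ in $\Pt_B(\C)$ and observe that the kernel functor $\Ker_B$ carries this intersection to $\bigwedge_{i\in I} H_i$. The only cosmetic difference is that you verify by hand that intersections in $\Pt_B(\C)$ are computed in $\C$ and commute with taking kernels, whereas the paper deduces that $\Ker_B$ preserves intersections directly from the existence of its left adjoint $B+(-)$.
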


\begin{proof}
If $h_i \colon H_i \rightarrowtail G$ is a characteristic
subobject, then, for every action $\xi \colon B \flat G \to G$,
there is a morphism in $\Pt_B(\C)$:
$$ \xymatrix{
    H_i \ar@{ |>->}[r] \ar@{ |>->}[d]_{h_i} & Y_i \ar@<.5ex>[r]^{p_i} \ar@{ >->}[d]
            & B \ar@<.5ex>[l]^{s_i} \ar@{=}[d] \\
    G \ar@{ |>->}[r] & G \rtimes_\xi B \ar@<.5ex>[r] & B \ar@<.5ex>[l]
} $$ Since the kernel functor $\Ker_B \colon \Pt_B(\C) \rightarrow
\C$ has a left adjoint, it preserves intersections, so the object
$\bigwedge_{i\in I} H_i$ in \C\ is the kernel of the intersection
$\bigwedge_{i\in I} (Y_i,p_i,s_i)$ in $\Pt_B(\C)$.
\end{proof}

When the category \C\ is not only semi-abelian, but also strongly
protomodular \cite{Bourn-strongly-protomod}, internal actions
behave well with respect to quotients. More precisely, in
\cite{MeQoa} the following result is proved.

\begin{prop} \label{prop:str.prot}
A semi-abelian category is strongly semi-abelian (i.e.\
semi-abelian and strongly protomodular) if and only if the
following property holds:
\begin{itemize}
    \item for every normal subobject $H \triangleleft G$ and every action $\xi \colon B \flat G \to G$, if $\xi$ restricts
    to $H$, then $\xi$ also induces a (unique) action $\widetilde{\xi}$ on the quotient $G/H$:
$$ \xymatrix{
    B \flat H \ar[r] \ar[d]_{\overline{\xi}} & B \flat G \ar[r] \ar[d]^{\xi}
            & B \flat (G/H) \ar[d]^{\widetilde{\xi}} \\
    H \ar@{ |>->}[r]_-h & G \ar@{-|>}[r]_-q & G/H
} $$
\end{itemize}
\end{prop}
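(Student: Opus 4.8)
The plan is to reduce everything to the standard characterisation of strong protomodularity: a semi-abelian category $\C$ is strongly protomodular if and only if, for every object $B$, the kernel functor $\Ker_B \colon \Pt_B(\C) \to \C$ reflects normal monomorphisms \cite{Bourn-strongly-protomod,Borceux-Bourn}. Apart from this, I would only need a few routine facts about the semi-abelian context: that $\Pt_B(\C)$ is again semi-abelian; that $\Ker_B$ is left exact, conservative and preserves regular epimorphisms, so that it sends a short exact sequence in $\Pt_B(\C)$ to a short exact sequence in $\C$; that $B \flat (-)$ preserves regular epimorphisms; and that the equivalence between internal actions and points \cite{Bourn-Janelidze:Semidirect} is natural and identifies $\Ker_B$ (on the points side) with the forgetful functor to $\C$ (on the actions side). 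Under this equivalence a morphism of points inducing a given $f$ on kernels is precisely a morphism of the corresponding actions with underlying morphism $f$; in particular, ``$\xi$ restricts to $H$'' translates into the existence of a (automatically monic, by the split short five lemma) morphism $j \colon (Y,q,t) \to (A,p,s)$ in $\Pt_B(\C)$, with $(A,p,s)$ corresponding to $(G,\xi)$ and $(Y,q,t)$ to $(H,\overline{\xi})$, inducing $h \colon H \rightarrowtail G$ on kernels.

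For the ``only if'' implication, assume $\C$ strongly protomodular and take $H \triangleleft G$ and an action $\xi$ restricting to $\overline{\xi}$ on $H$; form $j$ as above. Since $\Ker_B(j) = h$ is a normal monomorphism, the reflection property makes $j$ normal in $\Pt_B(\C)$, so (the latter being semi-abelian) I can take its cokernel $c \colon (A,p,s) \twoheadrightarrow (A',p',s')$, for which $j = \ker c$. Applying $\Ker_B$ turns the short exact sequence given by $j$ and $c$ into a short exact sequence in $\C$ whose first map is $h$; hence $\Ker_B(c) \colon G \to \Ker_B(A',p',s')$ is the cokernel of $h$, that is, $(A',p',s')$ is a point over $B$ with kernel canonically $G/H$. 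Transporting $(A',p',s')$ back to an action $\widetilde{\xi}$ of $B$ on $G/H$, naturality of the equivalence turns $c$ into a morphism of actions $(G,\xi) \to (G/H,\widetilde{\xi})$ with underlying morphism the projection $q \colon G \twoheadrightarrow G/H$, which is exactly the right-hand square in the statement; and $\widetilde{\xi}$ is the unique such action since $1 \flat q$ is a (regular) epimorphism.

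For the ``if'' implication, assume the stated property and let $j \colon (Y,q,t) \rightarrowtail (A,p,s)$ be a monomorphism in $\Pt_B(\C)$ whose image $h \colon H \rightarrowtail G$ under $\Ker_B$ is normal, where $G = \Ker p$ and $H = \Ker q$. Reading $(A,p,s)$ and $(Y,q,t)$ as actions, $j$ says precisely that $\xi$ restricts to $\overline{\xi}$ on $H$, so by the property $\xi$ induces $\widetilde{\xi}$ on $G/H$; transport $\widetilde{\xi}$ to a point $(A',p',s')$ with kernel $G/H$. Compatibility of the two actions with $q \colon G \twoheadrightarrow G/H$ yields a regular epimorphism $c \colon (A,p,s) \twoheadrightarrow (A',p',s')$ in $\Pt_B(\C)$ with $cj = 0$, so $j$ factors as $(\ker c)\circ\varphi$. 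Applying $\Ker_B$, which preserves kernels, identifies $\Ker_B(\ker c) = \ker(\Ker_B c) = \ker q$ with $h$ as well, whence $\Ker_B(\varphi) = 1_H$; since $\Ker_B$ is conservative, $\varphi$ is an isomorphism, $j \cong \ker c$ is normal, and so $\C$ is strongly protomodular.

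The only delicate point is the interplay between $\Ker_B$ and quotients: one must be sure that $\Ker_B$ is exact enough — concretely, that it preserves regular epimorphisms, so that the cokernel of $j$ in $\Pt_B(\C)$ really has kernel $G/H$ — and that it is conservative, and then invoke the precise characterisation of strong protomodularity via reflection of normal monomorphisms. Once these ingredients and the naturality of the actions--points equivalence are granted, both implications are essentially formal; a direct argument avoiding the equivalence, via the $3\times 3$ lemma, is also possible but considerably less transparent.
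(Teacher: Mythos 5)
Your proof is correct. Note that the paper does not actually prove Proposition \ref{prop:str.prot}: it quotes it from \cite{MeQoa}, and your argument is essentially the standard one used there — reduce strong protomodularity, in the pointed case, to the statement that each kernel functor $\Ker_B \colon \Pt_B(\C) \to \C$ reflects normal monomorphisms, and then translate both implications through the actions/points equivalence, using that $\Pt_B(\C)$ is semi-abelian, that $\Ker_B$ preserves kernels and regular epimorphisms (hence short exact sequences) and is conservative. All the ``routine facts'' you invoke do hold: regular epimorphisms in $\Pt_B(\C)$ are computed on underlying objects, and the induced map on kernels of a morphism of points over $1_B$ is a pullback of the middle map, so $\Ker_B$ preserves regular epis; the only step worth spelling out is the reduction of strong protomodularity to the kernel functors, which is the standard fact you cite and follows since $\Ker_B = \alpha_B^*$ is itself a change-of-base functor, while for $p \colon E \to B$ one has $\alpha_E^* p^* = \alpha_B^*$ with $\alpha_E^*$ left exact (so preserving normal monos, these being kernels in the semi-abelian fibres), whence reflection by all $\alpha_B^*$ gives reflection by all $p^*$. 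Also, as you implicitly note in the ``if'' direction, the regular-epimorphy of $c$ is not even needed there: $cj=0$ and conservativity of $\Ker_B$ already give $j \cong \ker c$.
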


In terms of split extensions, this means that if a kernel $h$ is
the restriction of some $\phi$ in $\Pt_B(\C)$, then $q=\coker(h)$
is the restriction of $\gamma=\coker(\phi)$ in $\Pt_B(\C)$:
\begin{equation} \label{eq:cok.pt}
\begin{aligned}
\xymatrix{
    H \ar@{ |>->}[r] \ar@{ |>->}[d]_h & Y \ar@<.5ex>[r] \ar@{ >->}[d]^\phi & B \ar@<.5ex>[l] \ar@{=}[d] \\
    G \ar@{ |>->}[r] \ar@{-|>}[d]_q & X \ar@<.5ex>[r] \ar@{-|>}[d]^\gamma & B \ar@<.5ex>[l] \ar@{=}[d] \\
    G/H \ar@{ |>->}[r] & Z \ar@<.5ex>[r] & B \ar@<.5ex>[l]
}
\end{aligned}
\end{equation}

In fact, it turns out that, for the special class of
characteristic subobjects, strong protomodularity is not needed in
order to transfer actions to the quotient.

\begin{prop} \label{prop:act.quot}
If $H$ is a characteristic subobject of $G$, then every action on
$G$ induces an action on the quotient $G/H$, as in the diagram of
Proposition \ref{prop:str.prot}.
\end{prop}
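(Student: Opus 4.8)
The plan is to transport the statement along the equivalence between internal actions and points, reducing it, in the spirit of the translation \eqref{eq:cok.pt}, to the construction of a suitable quotient point. Fix an object $B$ and an action $\xi\colon B\flat G\to G$, and let $(G\rtimes_\xi B,p,s)$ be the associated point, with kernel inclusion $k\colon G\rightarrowtail G\rtimes_\xi B$; the action read off from this point is $\xi$ itself. It suffices to produce a point $(Z,p_Z,s_Z)$ over $B$ together with a morphism of points $\gamma\colon(G\rtimes_\xi B,p,s)\to(Z,p_Z,s_Z)$ whose restriction to kernels is the quotient $q=\coker(h)\colon G\twoheadrightarrow G/H$. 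Indeed, applying the kernel functor --- which factors through the comparison functor into the category of $B\flat$-algebras --- to such a $\gamma$ turns it into a morphism of $B\flat$-algebras $(G,\xi)\to(G/H,\widetilde{\xi})$, that is, into the commutativity $q\,\xi=\widetilde{\xi}\circ(1\flat q)$. This is precisely the right-hand square of the diagram in Proposition~\ref{prop:str.prot}, the left-hand one commuting by the very definition of characteristic subobject; thus $\widetilde{\xi}$ is the desired action on $G/H$ (and the unique one, since $B\flat(-)$ preserves the regular epimorphism $q$).

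The crucial step --- the one where characteristicness takes over the role played by strong protomodularity in Proposition~\ref{prop:str.prot} --- is to show that $H$ is normal in the whole semidirect product $G\rtimes_\xi B$, not just in $G$. By Corollary~\ref{cor:char=>norm}, $H\car G$ gives $H\triangleleft G$; moreover $G=\Ker p$ is normal in $G\rtimes_\xi B$. Since $H$ is characteristic in $G$, Proposition~\ref{prop:normality.lifts} applies and yields $H\triangleleft G\rtimes_\xi B$. (In a general semi-abelian category normality is not transitive, so it is genuinely the characteristic hypothesis, and not some further exactness property of \C, that makes this work.) Let then $\gamma\colon G\rtimes_\xi B\twoheadrightarrow Z$ be the cokernel of the normal monomorphism $kh\colon H\rightarrowtail G\rtimes_\xi B$, so that $\Ker\gamma=H$.

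It remains to endow $Z$ with a point structure and to identify its kernel. Since $p\,kh=0$ (because $H\le G=\Ker p$), the morphism $p$ factors uniquely as $p=p_Z\gamma$; putting $s_Z:=\gamma s$ one checks at once that $p_Zs_Z=1_B$, so $(Z,p_Z,s_Z)$ is a point and $\gamma$ a morphism of points. To compute $\Ker p_Z$, pull its kernel back along the regular epimorphism $\gamma$: one gets $\gamma^{-1}(\Ker p_Z)=\Ker(p_Z\gamma)=\Ker p=G$, so $\gamma$ restricts to a regular epimorphism $G\twoheadrightarrow\Ker p_Z$ (regular epimorphisms being pullback-stable in a semi-abelian category), with kernel $\Ker\gamma\wedge G=H\wedge G=H$. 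Hence $\Ker p_Z\cong G/H$, and the morphism induced on kernels by $\gamma$ is exactly $q=\coker(h)$. This is the morphism of split extensions required by \eqref{eq:cok.pt}, and the proof is complete.

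The only steps calling for some care are the appeal to Proposition~\ref{prop:normality.lifts} to promote the normality of $H$ from $G$ to $G\rtimes_\xi B$, which is the conceptual core of the argument, and the short diagram chase identifying $\Ker p_Z$ with $G/H$ and the induced map with $q$; the latter is routine in any semi-abelian category, using only the pullback-stability of regular epimorphisms and the computation of kernels of their restrictions as intersections.
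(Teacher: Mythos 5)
Your proof is correct and takes essentially the same route as the paper: the crucial step is identical, namely applying Proposition \ref{prop:normality.lifts} (with $G=\Ker p_B$ normal in $G\rtimes_\xi B$ and $H\car G$) to get $H\triangleleft G\rtimes_\xi B$, and then passing to the quotient by $H$. The only difference is presentational: where the paper notes that the monomorphism of points with kernel $H$ is normal in $\Pt_B(\C)$ and takes its cokernel there (citing Borceux--Bourn), you build the quotient point $Z=(G\rtimes_\xi B)/H$ explicitly in \C\ and check directly, via pullback-stability of regular epimorphisms, that its kernel is $G/H$ with induced map $q$.
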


\begin{proof}
By Proposition \ref{prop:normality.lifts}, for every action $\xi
\colon B \flat G \to G$, $H$ is a normal subobject of $G
\rtimes_\xi B$. Then the arrow $Y \rightarrowtail G \rtimes_{\xi}
B$, induced by the restriction of $\xi$ to $H$, is a normal
monomorphism in $\Pt_B(\C)$, according to \cite[Proposition
6.2.1]{Borceux-Bourn}:
$$ \xymatrix{
    H \ar@{ |>->}[r] \ar@{ |>->}[d]^{h} & Y \ar@<.5ex>[r] \ar@{ >->}[d] & B \ar@<.5ex>[l] \ar@{=}[d] \\
    G \ar@{ |>->}[r] & G \rtimes_\xi B \ar@<.5ex>[r] & B \ar@<.5ex>[l]
} $$ By taking its cokernel, we get an exact sequence as in
diagram (\ref{eq:cok.pt}).
\end{proof}

\begin{prop} \label{prop:K.char}
If $H \leq K \leq G$, $H$ is characteristic in $G$ and $K/H$ is
characteristic in $G/H$, then $K$ is characteristic in $G$.
\end{prop}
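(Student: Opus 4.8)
The plan is to fix an arbitrary internal action on $G$ and push it through the two restriction/quotient steps supplied by the hypotheses, working throughout with the equivalent description in terms of points over $B$. So let $\xi \colon B \flat G \to G$ be an internal action, and let $G \rightarrowtail X \to B$, with $X = G \rtimes_\xi B$, be the corresponding split extension. Since $H$ is characteristic in $G$, the action $\xi$ restricts to $H$, and by Proposition~\ref{prop:act.quot} it induces an action $\widetilde{\xi}$ on $G/H$. Translated into points (cf.\ diagram~(\ref{eq:cok.pt})), this gives a normal monomorphism $\phi \colon Y \rightarrowtail X$ in $\Pt_B(\C)$ with kernel $h \colon H \rightarrowtail G$, whose cokernel $\gamma \colon X \twoheadrightarrow Z$ in $\Pt_B(\C)$ has kernel the quotient $q \colon G \twoheadrightarrow G/H$, with $Z = (G/H) \rtimes_{\widetilde{\xi}} B$. (We use Corollary~\ref{cor:char=>norm} to know that $H \triangleleft G$, hence $H \triangleleft K$, so that $K/H$ is meaningful.)

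Next I would feed $\widetilde{\xi}$ into the second hypothesis. Since $K/H$ is characteristic in $G/H$, the action $\widetilde{\xi}$ restricts to $K/H$: there is a point $W$ over $B$ with kernel $K/H$ together with a monomorphism $w \colon W \rightarrowtail Z$ in $\Pt_B(\C)$ inducing the inclusion $K/H \rightarrowtail G/H$ on kernels. Now form the pullback $V := X \times_Z W$ of $\gamma$ along $w$ in $\Pt_B(\C)$; the projection $V \rightarrowtail X$ is again a monomorphism, being a pullback of $w$. Since $\Ker_B \colon \Pt_B(\C) \to \C$ has a left adjoint, it preserves this pullback, so $\Ker_B(V)$ is the pullback of $K/H \rightarrowtail G/H$ along $q$, that is, the inverse image $q^{-1}(K/H)$. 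Since $H = \ker q$ is contained in $K$, this inverse image equals $K$, so $V \rightarrowtail X$ induces the inclusion $K \rightarrowtail G$ on kernels. This says exactly that $\xi$ restricts to $K$; as $\xi$ was arbitrary, $K \car G$.

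The argument is essentially formal, and the two steps I expect to require the most care are the identifications in the last paragraph: that $\Ker_B$ genuinely transports $V$ to $q^{-1}(K/H)$ — immediate from its being a right adjoint — and that $q^{-1}(K/H) = K$, which is the categorical correspondence theorem and the point where the hypothesis $H \leq K$ really enters (one uses $q^{-1}(q(K)) = K \vee H$, valid in any regular, hence any semi-abelian, category). It is also worth stressing that the passage to the quotient $G/H$ in the first paragraph is legitimate \emph{without} any strong protomodularity assumption precisely because of Proposition~\ref{prop:act.quot}.
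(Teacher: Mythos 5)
Your proof is correct and is essentially the paper's own argument: both take an arbitrary point over $B$ with kernel $G$, pass to the quotient point with kernel $G/H$ via Proposition~\ref{prop:act.quot}, pull back the point over $K/H$ along the quotient map of points in $\Pt_B(\C)$, and use that $\Ker_B$, having the left adjoint $B+(-)$, preserves pullbacks; the only difference is cosmetic, namely that the paper identifies the resulting kernel with $K$ by noting (via protomodularity) that the square formed by $K \twoheadrightarrow K/H$ and $G \twoheadrightarrow G/H$ is a pullback, whereas you phrase the same identification as $q^{-1}(q(K)) = K \vee H = K$. One caveat: that identity is \emph{not} valid in an arbitrary regular pointed category (it already fails for monoids); it holds here because $\C$ is protomodular, which is precisely the ingredient the paper's proof makes explicit, so you should justify it by protomodularity (or the semi-abelian correspondence theorem) rather than by regularity alone.
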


\begin{proof}
Let us consider the following diagram
$$ \xymatrix{
    H \ar@{ |>->}[r] \ar@{=}[d] & K \ar@{-|>}[r] \ar@{ >->}[d]^k & K/H \ar@{ >->}[d]^{\widetilde{k}} \\
    H \ar@{ |>->}[r] & G \ar@{-|>}[r]_-q & G/H
} $$ The right hand side square is a pullback (this comes from the
fact that the category $\C$, being semi-abelian, is protomodular
\cite{Bourn-protomod}). By Proposition \ref{prop:act.quot} every
action of some $B$ on $G$ induces an action on $G/H$. By
assumption, the same action restricts to $K/H$. In terms of
points, we have a cospan in $\Pt_B(\C)$ whose restriction to the
kernels is the pair $(q,\widetilde{k})$. Now, since the kernel
functors preserve pullbacks, $K$ is the kernel of the pullback in
$\Pt_B(\C)$ of the same cospan, hence the action of $B$ on $G$
restricts to $K$.
\end{proof}

\begin{prop} \label{prop:R.char}
If $H$ is characteristic in $G$, then its corresponding
equivalence relation $R$ on $G$ is closed under actions on $G$,
i.e.\ there exists an action $R(\xi)$ of $B$ on $R$ which makes
the following diagram commute:
$$ \xymatrix{
    B \flat R \ar[d]_{R(\xi)} \ar@<.5ex>[r]^{1 \flat r_1} \ar@<-.5ex>[r]_{1 \flat r_2} & B \flat G \ar[d]^{\xi} \\
    R \ar@<.5ex>[r]^{r_1} \ar@<-.5ex>[r]_{r_2} & G
} $$
\end{prop}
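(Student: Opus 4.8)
The plan is to exploit the fact that the equivalence relation $R$ on $G$ associated with a normal subobject $H$ is, in the semi-abelian setting, precisely the pullback $G \times_{G/H} G$ of the quotient map $q \colon G \twoheadrightarrow G/H$ along itself (this is the standard correspondence between effective equivalence relations and normal monomorphisms in a Barr-exact protomodular category). Since $H$ is characteristic, Corollary \ref{cor:char=>norm} gives $H \triangleleft G$, so $R$ is indeed this kernel pair, with projections $r_1, r_2$ the two pullback legs. The strategy is therefore: fix an action $\xi \colon B \flat G \to G$, pass to the associated point $(G \rtimes_\xi B, p, s)$, produce an action on $R$ by a limit construction in $\Pt_B(\C)$, and then transport back along the action/point equivalence.

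First I would invoke Proposition \ref{prop:act.quot}: since $H$ is characteristic in $G$, the action $\xi$ descends to an action $\widetilde{\xi}$ on $G/H$, which in point-theoretic terms gives the vertical exact sequence (\ref{eq:cok.pt}) with $\phi$ replaced by the map $Y \rightarrowtail G \rtimes_\xi B$ coming from the restriction of $\xi$ to $H$ and $\gamma = \coker(\phi)$ a split epimorphism of points $G \rtimes_\xi B \twoheadrightarrow (G/H) \rtimes_{\widetilde{\xi}} B$ over $B$. Next, I would form the kernel pair of $\gamma$ in the category $\Pt_B(\C)$; this is a split epimorphism $P \to B$ whose underlying object is $(G \rtimes_\xi B) \times_{(G/H)\rtimes_{\widetilde\xi}B} (G \rtimes_\xi B)$. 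The key point is then that the kernel functor $\Ker_B \colon \Pt_B(\C) \to \C$ preserves limits (it is a right adjoint, as recalled in the proof of Proposition \ref{prop:meet.char}), and in particular preserves kernel pairs; hence $\Ker_B(P \to B)$ is the kernel pair of $\Ker_B(\gamma) = q$, i.e.\ exactly $R$. Reading off the comparison functor, the point $P \to B$ corresponds to an action of $B$ on $R$, which I will call $R(\xi)$.

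It remains to check that the two projections $r_1, r_2 \colon R \to G$ are morphisms of actions from $(R, R(\xi))$ to $(G, \xi)$, i.e.\ that the stated diagram commutes. This follows because the two pullback projections $\pi_1, \pi_2 \colon P \to G \rtimes_\xi B$ are morphisms in $\Pt_B(\C)$ by construction of the kernel pair, and the comparison functor from points to actions is functorial, sending $\pi_i$ to $1 \flat r_i$ on the $B\flat(-)$ side and to $r_i$ on the kernel side; naturality of the comparison then yields $\xi \circ (1 \flat r_i) = r_i \circ R(\xi)$ for $i = 1, 2$. Uniqueness of $R(\xi)$ with this property is automatic, since an action making both squares commute is determined by its values, and in any case the algebra structure induced by a point is unique.

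The step I expect to be the main obstacle is the identification of $\Ker_B$ of the kernel pair in $\Pt_B(\C)$ with the equivalence relation $R$ itself, rather than merely with \emph{some} relation on $G$ closed under the action. One must be careful that the kernel-pair construction in $\Pt_B(\C)$ is computed on underlying objects as the kernel pair in $\C$ (this uses that limits in $\Pt_B(\C)$ are computed pointwise, which holds since $\Pt_B(\C)$ is a category of points in a finitely complete category), and that $\Ker_B$ preserving this limit pins down the underlying object of the resulting point as $G \times_{G/H} G = R$ with the correct projections. Once this bookkeeping is in place, the rest is formal functoriality of the comparison equivalence.
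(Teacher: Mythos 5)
Your argument is correct and follows essentially the same route as the paper's proof: both invoke Proposition \ref{prop:act.quot} to obtain the quotient morphism $\gamma$ in $\Pt_B(\C)$ and then use that the kernel functor $\Ker_B$, having the left adjoint $B+(-)$, preserves the kernel pair of $\gamma$, so that its kernel is exactly $R$ with the correct projections, yielding the action $R(\xi)$. The naturality bookkeeping you spell out at the end is left implicit in the paper but is just the same argument made explicit.
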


\begin{proof}
By Proposition \ref{prop:act.quot} every action of some $B$ on $G$
induces an action on $G/H$. Now, since kernel functors preserve
pullbacks, $R$ is the kernel of the kernel pair in $\Pt_B(\C)$ of
the morphism $\gamma$ of diagram (\ref{eq:cok.pt}):
$$ \xymatrix{
    R \ar@{ |>->}[r] \ar@<.5ex>[d]^{r_1} \ar@<-.5ex>[d]_{r_2}
        & R_\gamma \ar@<.5ex>[r] \ar@<.5ex>[d] \ar@<-.5ex>[d]   & B \ar@<.5ex>[l] \ar@{=}[d] \\
    G \ar@{ |>->}[r] \ar@{-|>}[d]_q & X \ar@<.5ex>[r] \ar@{-|>}[d]^\gamma & B \ar@<.5ex>[l] \ar@{=}[d] \\
    G/H \ar@{ |>->}[r] & Z \ar@<.5ex>[r] & B \ar@<.5ex>[l]
} $$
\end{proof}

We can make explicit the previous proposition in the category \Gp.
It says that for all pairs $(x,y) \in R$ and for all $b \in B$,
the pair $(^bx,\, ^by) \in R$.

More in general, whenever $B$ acts on $G$, there is an induced
action on $G \times G$ (simply computing the product in
$\Pt_B(\C)$), and the inclusion $R \rightarrowtail G \times G$ is
compatible with the corresponding actions. However, this does not
mean that $R$ is a characteristic subobject of $G \times G$.


\section{Joins} \label{sec:joins}

While the outcomes listed in Section \ref{sec:defi} hold in the
very general case of semi-abelian categories, the property that
finite joins of characteristic subobjects are characteristic
(which is true in the category of groups, for example) seems to
hold only in stronger contexts.

An additional requirement, for a semi-abelian category, which
turns out to be crucial in this sense, is to ask that kernel
functors preserve jointly strongly epimorphic pairs. This is
equivalent to the fact that, for all pairs
$((Y,p_1,s_1),(Z,p_2,s_2))$ of objects in $\Pt_B(\C)$, the
canonical arrow in \C:
$$ \Ker_B(Y,p_1,s_1)+\Ker_B(Z,p_2,s_2) \rightarrow \Ker_B((Y,p_1,s_1)+(Z,p_2,s_2)) $$
is a regular epimorphism.

\begin{lemma}[\cite{CMM}] \label{lemma:jse<=>joins}
Let \C\ be a semi-abelian category. For any object $B \in \C$ the
kernel functor $\Pt_B(\C) \to \C$ preserves jointly strongly
epimorphic pairs if and only if it preserves binary joins.
\end{lemma}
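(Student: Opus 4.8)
The plan is to reduce the statement to the standard description of binary joins in a regular category as images of coproducts. Since \C\ is semi-abelian, each $\Pt_B(\C)$ is regular and has finite coproducts (in fact it is again semi-abelian), so for two subobjects $u\colon U\rightarrowtail A$, $v\colon V\rightarrowtail A$ the join $U\vee V$ is the codomain of the regular-epi part $e\colon U+V\twoheadrightarrow U\vee V$ of the image factorisation of $[u,v]\colon U+V\to A$, the coproduct being formed in $\Pt_B(\C)$. I would use the following facts about $\Ker_B$. First, being a right adjoint it preserves monomorphisms, hence it is monotone on subobjects, so $\Ker_B U\vee\Ker_B V\leq\Ker_B(U\vee V)$ always and only the reverse inequality needs proof. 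Second, $\Ker_B$ preserves regular epimorphisms: this is standard in the homological setting, since a regular epimorphism of $\Pt_B(\C)$ is a regular epimorphism on underlying objects (its image in \C\ inherits a point structure and, by uniqueness of image factorisations, the monomorphic part is forced to be invertible), and $\Ker_B$ carries such a morphism to its pullback along the inclusion of a kernel, hence again to a regular epimorphism.

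For the implication ``$\Ker_B$ preserves jointly strongly epic pairs $\Rightarrow$ $\Ker_B$ preserves binary joins'', I would fix subobjects $u,v$ of $A$ and factor $[u,v]$ as $U+V\xrightarrow{\,e\,}U\vee V\xrightarrow{\,w\,}A$. Since $e=[e\iota_U,e\iota_V]$ is a regular epimorphism, the pair $(e\iota_U,e\iota_V)$ is jointly strongly epic, so by hypothesis so is $(\Ker_B(e\iota_U),\Ker_B(e\iota_V))$; equivalently, the induced arrow $q\colon\Ker_B U+\Ker_B V\to\Ker_B(U\vee V)$ is a regular epimorphism. As $[\Ker_B u,\Ker_B v]=\Ker_B(w)\circ q$ with $\Ker_B(w)$ a monomorphism, this is the image factorisation of $[\Ker_B u,\Ker_B v]$, whence $\Ker_B U\vee\Ker_B V=\Ker_B(U\vee V)$ inside $\Ker_B A$. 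It is worth recording the special case $u=\iota_U$, $v=\iota_V$, $A=U+V$, where $U\vee V$ is the whole of $U+V$: preservation of that join is exactly the assertion, made just before the Lemma, that the canonical comparison $\Ker_B U+\Ker_B V\to\Ker_B(U+V)$ is a regular epimorphism.

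For the converse, ``$\Ker_B$ preserves binary joins $\Rightarrow$ $\Ker_B$ preserves jointly strongly epic pairs'', I would take a jointly strongly epic pair $(f_1\colon Y_1\to A,\,f_2\colon Y_2\to A)$ in $\Pt_B(\C)$ and factor $f_i=m_ie_i$ with $e_i$ a regular epimorphism onto $Y_i':=\mathrm{Im}(f_i)$ and $m_i\colon Y_i'\rightarrowtail A$ a monomorphism. The elementary but essential point is that a pair of morphisms is jointly strongly epic precisely when the join of their images is the whole object, so $Y_1'\vee Y_2'=A$, and the hypothesis gives $\Ker_B Y_1'\vee\Ker_B Y_2'=\Ker_B(Y_1'\vee Y_2')=\Ker_B A$. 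Since $\Ker_B$ preserves regular epimorphisms, $\mathrm{Im}(\Ker_B f_i)=\mathrm{Im}(\Ker_B(m_i)\circ\Ker_B(e_i))=\mathrm{Im}(\Ker_B m_i)=\Ker_B Y_i'$, and hence $\mathrm{Im}(\Ker_B f_1)\vee\mathrm{Im}(\Ker_B f_2)=\Ker_B A$, i.e.\ $(\Ker_B f_1,\Ker_B f_2)$ is jointly strongly epic.

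I expect the only real difficulty to be in this last direction: one cannot assume the $f_i$ to be monomorphisms, and it is precisely in passing from $f_i$ to the subobject $Y_i'=\mathrm{Im}(f_i)$ that the preservation of regular epimorphisms by $\Ker_B$ comes in; the forward direction, by contrast, uses little more than the fact that $\Ker_B$ preserves monomorphisms. Everything else is a bookkeeping dictionary between ``jointly strongly epic pair'' and ``join of subobjects equal to the top'', which is available because $\Pt_B(\C)$ is regular and carries the coproducts needed to form these joins.
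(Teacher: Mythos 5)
Your argument is correct and is, in substance, the intended one: the paper itself gives no proof (the lemma is quoted from \cite{CMM}), but both the reformulation stated immediately before the lemma and the residual reference to ``the proof of the Lemma'' inside the proof of Proposition \ref{prop:huq.char} indicate exactly your dictionary, namely that joins in $\Pt_B(\C)$ and in \C\ are regular images of the induced maps out of coproducts, so that preservation of jointly strongly epimorphic pairs amounts to the canonical comparison $\Ker_B U + \Ker_B V \to \Ker_B(U\vee V)$ being a regular epimorphism. Your two auxiliary facts --- that regular epimorphisms in $\Pt_B(\C)$ are regular epimorphisms on underlying objects, and that $\Ker_B$ preserves them because the induced arrow on kernels is a pullback of the given arrow along the kernel inclusion --- are both correct and are precisely what makes the converse direction go through.
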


It is worth noting that the previous lemma does not say, in
particular, that, under the assumption, kernel functors preserve
coproducts. A counterexample to this fact is given in the proof of
Proposition 6.2 in \cite{Gray12} for the category of commutative
(not necessarily unitary) rings.

\begin{prop} \label{prop:join.char}
Let \C\ be a semi-abelian category where kernel functors preserve
jointly strongly epimorphic pairs. If $H$ and $K$ are
characteristic subobjects of $G$, then $H \vee K$ is
characteristic in $G$.
\end{prop}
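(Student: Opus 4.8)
The plan is to reduce everything to a single statement about the kernel functor applied to a join computed inside a semidirect product. Fix an arbitrary object $B$ and an internal action $\xi \colon B \flat G \to G$, and let $\mathbb{G} = (G \rtimes_\xi B, p, s)$ be the associated point over $B$, whose kernel is $G$. Since $H$ and $K$ are characteristic in $G$, the action $\xi$ restricts to each of them; by the reformulation of Definition \ref{defi:char.sub} in terms of split extensions, this yields points $\mathbb{Y}_1$ and $\mathbb{Y}_2$ over $B$ together with morphisms of points $\mathbb{Y}_1 \to \mathbb{G}$ and $\mathbb{Y}_2 \to \mathbb{G}$ which are $1_B$ on cokernels and induce, respectively, $h \colon H \rightarrowtail G$ and $k \colon K \rightarrowtail G$ on kernels. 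By the split short five lemma these morphisms are monomorphisms, so $\mathbb{Y}_1$ and $\mathbb{Y}_2$ are subobjects of $\mathbb{G}$ in $\Pt_B(\C)$.

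Next I would form the join $\mathbb{Y}_1 \vee \mathbb{Y}_2$ as a subobject of $\mathbb{G}$ in $\Pt_B(\C)$; this exists because $\Pt_B(\C)$ is itself semi-abelian, hence admits finite joins of subobjects, the join being the regular image of the canonical arrow $\mathbb{Y}_1 + \mathbb{Y}_2 \to \mathbb{G}$. The crucial input is then the hypothesis on $\C$: by Lemma \ref{lemma:jse<=>joins}, the kernel functor $\Ker_B \colon \Pt_B(\C) \to \C$ preserves binary joins, so that, as subobjects of $\Ker_B(\mathbb{G}) = G$,
$$ \Ker_B(\mathbb{Y}_1 \vee \mathbb{Y}_2) \;=\; \Ker_B(\mathbb{Y}_1) \vee \Ker_B(\mathbb{Y}_2) \;=\; H \vee K . $$

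Finally I would read off the conclusion: the monomorphism $\mathbb{Y}_1 \vee \mathbb{Y}_2 \rightarrowtail \mathbb{G}$ in $\Pt_B(\C)$ is a morphism of points which is $1_B$ on cokernels and induces the inclusion $H \vee K \rightarrowtail G$ on kernels. Translating back through the equivalence between points over $B$ and internal actions of $B$, this says precisely that $\xi$ restricts to the subobject $H \vee K$. As $B$ and $\xi$ were arbitrary, $H \vee K \car G$. The only step needing a little care is the one highlighted above, namely that $\mathbb{Y}_1$ and $\mathbb{Y}_2$ are genuine subobjects of $\mathbb{G}$ in $\Pt_B(\C)$, so that their join is defined there and Lemma \ref{lemma:jse<=>joins} applies; this is taken care of by the split short five lemma together with the semi-abelianness of $\Pt_B(\C)$, and the remainder of the argument is formal.
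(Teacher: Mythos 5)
Your proof is correct and follows essentially the same route as the paper: you realise $H$ and $K$ as kernels of sub-points of $(G\rtimes_\xi B,p_B,i_B)$ in $\Pt_B(\C)$ and then apply Lemma \ref{lemma:jse<=>joins} (preservation of binary joins by $\Ker_B$) to identify $H\vee K$ as the kernel of the join of those sub-points. The only difference is that you spell out the details (split short five lemma, semi-abelianness of $\Pt_B(\C)$, join as regular image of the coproduct map) that the paper's two-line proof leaves implicit.
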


\begin{proof}
Being $H$ and $K$ characteristic, for every action of $B$ on $G$,
the cospan $\xymatrix@1{H \ar@{ >->}[r]^h & G & K \ar@{
>->}[l]_k}$ is the restriction to kernels of a cospan in
$\Pt_B(\C)$. By Lemma \ref{lemma:jse<=>joins}, $H \vee K$ is the
kernel of a point over $B$.
\end{proof}

A context in which the property of preservation of binary joins by
the kernel functor holds is the one of \emph{locally algebraically
cartesian closed} categories \cite{Bourn-Gray}. A semi-abelian
category $\C$ is said locally algebraically cartesian closed (or
simply LACC) if, for any morphism $p \colon E \to B$ in $\C$, the
change of base functor
\[ p^* \colon \Pt_B(\C) \to \Pt_E(\C), \] defined by taking
pullbacks along $p$, has a right adjoint. Examples of this
situation are the categories $\Gp$ of groups and $\Lie$ of Lie
algebras over a fixed commutative ring $R$. In this context the
kernel functors (which are change of base functors with $E=0$),
having right adjoints, preserve all finite colimits, and hence the
canonical arrow
$$ \Ker_B(Y,p_1,s_1)+\Ker_B(Z,p_2,s_2) \rightarrow \Ker_B((Y,p_1,s_1)+(Z,p_2,s_2)) $$
mentioned above is an isomorphism.
\medskip

Another context in which preservation of binary joins holds is
given by \emph{categories of interest} \cite{Orzech}. We recall
that a category of interest is a category $\mathcal{C}$ whose
objects are groups with a set of operation $\Omega$ and with a set
of equalities $\mathbb{E}$, such that $\mathbb{E}$ includes the
group laws and the following conditions hold. If $\Omega_i$ is the
set of $i$-ary operations in $\Omega$, then:
\begin{itemize}
\item[(a)] $\Omega = \Omega_0 \cup \Omega_1 \cup \Omega_2$;

\item[(b)] the group operations (written additively: $0, -, +$,
even if the group is not necessarily abelian) are elements of
$\Omega_0$, $\Omega_1$ and $\Omega_2$ respectively. Let
$\Omega_2^{\prime} = \Omega_2 \backslash \{+\}$,
$\Omega_1^{\prime} = \Omega_1 \backslash \{-\}$ and assume that if
$* \in \Omega_2^{\prime}$, then $\Omega_2^{\prime}$ contains
$*^{\circ}$ defined by $x *^{\circ} y = y * x$. Assume further
that $\Omega_0 = \{0\}$;

\item[(c)] for any $* \in \Omega_2^{\prime}$, $\mathbb{E}$
includes the identity $x * (y + z) = x * y + x * z$;

\item[(d)] for any $\omega \in \Omega_1^{\prime}$ and $* \in
\Omega_2^{\prime}$, $\mathbb{E}$ includes the identities $\omega(x
+ y) = \omega(x) + \omega(y)$ and $\omega(x) * y = \omega(x * y)$;

\item[(e)] \textbf{Axiom 1} $\ $ $x_1 + (x_2 * x_3) = (x_2 * x_3)
+ x_1$ for any $* \in \Omega_2^{\prime}$;

\item[(f)] \textbf{Axiom 2} $\ $ for any ordered pair $(*,
\overline{*}) \in \Omega_2^{\prime} \times \Omega_2^{\prime}$
there is a word $W$ such that
\[ (x_1 * x_2) \overline{*} x_3 = W(x_1(x_2 x_3), x_1(x_3 x_2),
(x_2 x_3)x_1, (x_3 x_2)x_1, \]
\[ x_2(x_1 x_3), x_2(x_3 x_1), (x_1
x_3)x_2, (x_3 x_1)x_2), \] where each juxtaposition represents an
operation in $\Omega_2^{\prime}$.
\end{itemize}

Examples of categories of interest are groups, Lie algebras,
rings, associative algebras, Leibniz algebras, Poisson algebras
and many others. Also in this context the kernel functors preserve
binary joins, as follows from \cite{CGrayVdL2} and Lemma
\ref{lemma:jse<=>joins} herein.

Since it will be useful later, we give here a description of
internal actions in \emph{categories of interest} (called
\emph{derived actions} in \cite{Casas-Datuashvili-Ladra}). In a
\emph{category of interest} \C, an action of an object $B$ on an
object $X$ is a set of functions:
$$ f_* \colon B \times X \to X\,, $$
one for each operation $*$ in $\Omega_2$ (we will write $b\cdot x$
for $f_+(b,x)$ and $b*x$ for $f_*(b,x)$, with $*\in\Omega_2'$),
such that the one corresponding to the group operation $+$
satisfies the usual axioms for a group action, the others are
bilinear with respect to $+$ and moreover the following axioms are
satisfied (for all $b,b_i\in B$, $x,x_i\in X$ and $*, \overline{*}
\in \Omega'_2$):
\begin{enumerate}
    \item $b\cdot(x_1*x_2)=x_1*x_2$;
    \item $x_1+(b*x_2)=(b*x_2)+x_1$;
    \item $(b_1*b_2)\cdot x=x$;
    \item $b_1\cdot(b_2*x)=b_2*x$;
    \item $\begin{array}{lll}
			(b * x_1) \overline{*} x_2 & = & W(b(x_1 x_2), b(x_2 x_1),(x_1 x_2)b, (x_2 x_1)b, \\
			& & x_1(b x_2), x_1(x_2 b),(b x_2)x_1, (x_2b)x_1);
		\end{array}$
    \item $\begin{array}{lll}
			(x_1 * x_2) \overline{*} b & = & W(x_1(x_2 b), x_1(b x_2),(x_2 b)x_1, (b x_2)x_1, \\
			& & x_2(x_1 b), x_2(b x_1), (x_1 b)x_2, (bx_1)x_2);
		\end{array}$
    \item $\begin{array}{lll}
			(b_1 * b_2) \overline{*} x & = & W(b_1(b_2 x), b_1(x b_2),(b_2 x)b_1, (x b_2)b_1, \\
			& & b_2(b_1 x), b_2(x b_1),(b_1 x)b_2, (x b_1)b_2);
		\end{array}$
    \item $\begin{array}{lll}
			(b_1 * x) \overline{*} b_2 & = & W(b_1(x b_2), b_1(b_2 x), (x b_2)b_1, (b_2 x)b_1, \\
			& & x(b_1 b_2), x(b_2 b_1),(b_1 b_2)x, (b_2b_1)x);
		\end{array}$
\end{enumerate}
where $W$ indicates the same word in \textbf{Axiom 2}
corresponding to the choice of $*$ and $\overline{*}$.

Observe that axioms 1--4 above come from \textbf{Axiom 1}, while
axioms 5--8 come from \textbf{Axiom 2} by replacing each operation
with the corresponding action (notice that the group action
replaces the conjugation and not the group operation). These
axioms are nothing but the translation of the condition that one
obtains by considering the equivalence between actions and points
and expressing the action as the conjugation into the semidirect
product. More explicitly, given a split extension:
$$ \xymatrix{
    X \ar[r]^k & A \ar@<.5ex>[r]^p & B \ar@<.5ex>[l]^s
} $$ the corresponding action is given by:
\begin{itemize}
    \item[] $b\cdot x =  k^{-1}(s(b)+k(x)-s(b))$;
    \item[] $b*x = k^{-1}(s(b)*k(x))$.
\end{itemize}

A wider class of semi-abelian varieties is given by \emph{groups
with operations} introduced by Porter in \cite{Porter}. In that
class, the description of internal actions is similar to the one
given above; axioms 1--8 are replaced by suitable ones coming from
the identities of the corresponding algebraic theory.


\section{Commutators} \label{sec:comm}

Another classical property of characteristic subgroups of a group
is the fact that the commutator of two characteristic subgroups is
characteristic as well. In order to study this property in a
categorical setting, we will use an intrinsic definition of the
commutator of two subobjects. There are different possible
definitions. The first we consider is the so-called Huq commutator
\cite{Huq}. It can be constructed in the following way (see
\cite{Borceux-Bourn} and \cite{MaMe10-2}): given two subobjects $h
\colon H \rightarrowtail G$ and $k \colon K \rightarrowtail G$ of
an object $G$, the Huq commutator $[H, K]_G$ of $H$ and $K$ is
given by the following diagram:
\[ \xymatrix{ & H + K \ar[r]^{\Sigma_{H,K}} \ar[d]_{[h, k]} & H \times K \ar[d]
\\
[H, K]_G \ar@{ |>->}[r] & G \ar[r]_-{\pi} & \frac{G}{[H, K]_G}, }
\] where $\Sigma_{H,K}$ is the canonical map
\[ \Sigma_{H, K} = \langle [1,0], [0,1] \rangle = [\langle 1,0 \rangle, \langle\ 0,1 \rangle] \colon H+K \to H \times K \]
from the coproduct to the product and the commutative square is a
pushout. Then the Huq commutator appears as the kernel of the
morphism $\pi$. Being a kernel, the Huq commutator is always a
normal subobject, even if $H$ and $K$ are not.
\medskip

Another possible way to define the commutator is via the so-called
Higgins commutator \cite{MaMe10-2}. Given two subobjects $h \colon
H \rightarrowtail G$ and $k \colon K \rightarrowtail G$ of an
object $G$, let us denote by $\sigma_{H, K} \colon H \diamond K
\to H + K$ the kernel of the canonical morphism $\Sigma_{H,K}
\colon H + K \to H \times K$. The Higgins commutator $[H, K]$ of
$H$ and $K$ is the regular image of $H \diamond K$ under the
morphism $[h, k] \sigma_{H, K}$, as in the following diagram:
\[ \xymatrix{ H \diamond K \ar@{->>}[r] \ar@{ |>->}[d]_{\sigma_{H,
K}} & [H, K] \ar@{ >->}[d] \\
H + K \ar[r]_-{[h, k]} & G. } \] The Higgins commutator of $H$ and
$K$ is not necessarily a normal subobject of $G$, even when $H$
and $K$ are. In fact, its normalisation in $G$ is the Huq
commutator. A category $\C$ is said to satisfy the (NH) property
when the Higgins commutator of two normal subobjects is normal,
or, in other words, when Higgins and Huq commutators of normal
subobjects coincide. The \NH\ property is satisfied both by \LACC\
categories and by categories of interest (see \cite{CGrayVdL1}).

Let us observe that in the special case where $H=K=G$ and
$h=k=1_G$, $[G,G]$ is always normal in $G$, since the map $[1,1]
\colon G+G \to G$ is a regular epimorphism and in the semi-abelian
context regular images of normal subobjects along regular
epimorphisms are normal.
\medskip

Let us now start the study of the Huq commutator of two
characteristic subobjects.

\begin{prop} \label{prop:huq.char}
Let \C\ be a semi-abelian category satisfying the following
properties:
\begin{enumerate}
    \item the kernel functors $\Ker\colon \Pt_{B}(\C)\to \C$ preserve jointly strongly epimorphic pairs;
    \item the kernel functors $\Ker\colon \Pt_{B}(\C)\to \C$ preserve cokernels.
\end{enumerate}
If $H$ and $K$ are characteristic subobjects of $G$, then the Huq
commutator $[H,K]_G$ is a characteristic subobject of $G$.
\end{prop}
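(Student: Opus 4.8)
The plan is to reproduce the Huq commutator construction inside the semi-abelian category $\Pt_B(\C)$ and then transport it back along the kernel functor, using the two hypotheses to compensate for the fact that $\Ker_B$ does not preserve coproducts.

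So I would fix an object $B$ and an action $\xi$ of $B$ on $G$, i.e.\ a point $\mathbb{X}$ in $\Pt_B(\C)$ with $\Ker_B\mathbb{X}=G$. Since $H$ and $K$ are characteristic, the point-theoretic reformulation of Definition~\ref{defi:char.sub} allows me to lift $h$ and $k$ to monomorphisms $\mathbb{H}\rightarrowtail\mathbb{X}$ and $\mathbb{K}\rightarrowtail\mathbb{X}$ of $\Pt_B(\C)$ (they are monomorphisms by the split short five lemma) inducing $h$, $k$ on kernels and $1_B$ on cokernels. As $\Pt_B(\C)$ is again semi-abelian, I can form there the Huq commutator $\mathbb{N}:=[\mathbb{H},\mathbb{K}]_{\mathbb{X}}$: it is the kernel of the morphism $\Pi\colon\mathbb{X}\to\mathbb{Q}$ obtained as the pushout of $\Sigma\colon\mathbb{H}+\mathbb{K}\to\mathbb{H}\times\mathbb{K}$ along $[\![h,k]\!]\colon\mathbb{H}+\mathbb{K}\to\mathbb{X}$, where $+$, $\times$, $\Sigma$ and the copairing $[\![h,k]\!]$ are all computed in $\Pt_B(\C)$. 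Then $\mathbb{N}$ is a (normal) sub-point of $\mathbb{X}$, so its inclusion witnesses that $\xi$ restricts to the subobject $\Ker_B\mathbb{N}\rightarrowtail G$; since $B$ and $\xi$ are arbitrary, it therefore suffices to prove that $\Ker_B\mathbb{N}$, viewed inside $G$, is exactly $[H,K]_G$.

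Because $\Ker_B$ has a left adjoint it preserves limits, so $\Ker_B\mathbb{N}=\ker(\Ker_B\Pi)$, and the problem reduces to identifying $\Ker_B\Pi$ with $\pi\colon G\to G/[H,K]_G$. I would then use the elementary fact that a functor between semi-abelian categories preserving kernels and cokernels preserves pushouts of regular epimorphisms along arbitrary morphisms (such a pushout being the cokernel of the composite of the kernel of the regular epimorphism with the other leg of the span). Here $\Sigma$ is a regular epimorphism because $\Pt_B(\C)$, being semi-abelian, is unital; and $\Ker_B$ preserves cokernels by hypothesis~(2). Hence applying $\Ker_B$ to the defining pushout of $\mathbb{N}$ produces the pushout of $\Ker_B\Sigma\colon P\to H\times K$ along $\Ker_B[\![h,k]\!]\colon P\to G$, where $P:=\Ker_B(\mathbb{H}+\mathbb{K})$ and I use that $\Ker_B$ preserves the product, $\Ker_B(\mathbb{H}\times\mathbb{K})=H\times K$. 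To recognise this as the pushout defining $[H,K]_G$, I would invoke hypothesis~(1): the canonical comparison $e\colon H+K\to P$ is then a regular epimorphism, and naturality of $e$ gives $\Ker_B\Sigma\circ e=\Sigma_{H,K}$ and $\Ker_B[\![h,k]\!]\circ e=[h,k]$. Since precomposing both legs of a span with a common epimorphism does not change its pushout, the pushout of $(\Ker_B\Sigma,\Ker_B[\![h,k]\!])$ coincides with the pushout of $(\Sigma_{H,K},[h,k])$, i.e.\ with the square of the Huq commutator construction for $H$ and $K$ in $G$. Therefore $\Ker_B\Pi=\pi$ and $\Ker_B\mathbb{N}=\ker\pi=[H,K]_G$, as wanted.

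The main obstacle, and the reason both hypotheses enter, is exactly this passage across $\Ker_B$: the functor does not preserve coproducts (see the remark following Lemma~\ref{lemma:jse<=>joins}), so it does not preserve the pushout diagram as drawn; hypothesis~(2) lets me rewrite that pushout as a cokernel, which \emph{is} preserved, and hypothesis~(1) supplies the regular epimorphism $H+K\twoheadrightarrow\Ker_B(\mathbb{H}+\mathbb{K})$ bridging the two coproducts. The remaining ingredients — that $\Pt_B(\C)$ is semi-abelian (hence unital, so $\Sigma$ is a regular epimorphism) and the naturality squares for $e$ — are routine and I would only verify them briefly.
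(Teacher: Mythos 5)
Your proposal is correct and takes essentially the same route as the paper's proof: both arguments rest on hypothesis (1) supplying the regular-epimorphic comparison $H+K \twoheadrightarrow \Ker_B(\mathbb{H}+\mathbb{K})$ and on hypothesis (2) (together with preservation of kernels) to transfer the defining pushout/cokernel of the Huq commutator along $\Ker_B$. The only difference is one of packaging: you compute $[\mathbb{H},\mathbb{K}]_{\mathbb{X}}$ inside $\Pt_B(\C)$ and identify its kernel with $[H,K]_G$, whereas the paper performs the corresponding diagram chase directly in \C, identifying $G/[H,K]_G$ as the kernel of a cokernel taken in $\Pt_B(\C)$.
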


\begin{proof}
If $H$ and $K$ are characteristic subobjects of $G$, then, for
every action $\xi \colon B \flat G \to G$, there is a cospan in
$\Pt_B(\C)$:
$$ \xymatrix{
    H \ar@{ |>->}[r]^{k_1} \ar@{ >->}[d]_h & Y \ar@<.5ex>[r]^{p_1} \ar@{ >->}[d]
            & B \ar@<.5ex>[l]^{s_1} \ar@{=}[d] \\
    G \ar@{ |>->}[r]^-{i_G} & G \rtimes_\xi B \ar@<.5ex>[r]^-{p_B} & B \ar@<.5ex>[l]^-{i_B} \ar@{=}[d] \\
    K \ar@{ |>->}[r]^{k_2} \ar@{ >->}[u]^k & Z \ar@<.5ex>[r]^{p_2} \ar@{ >->}[u] & B \ar@<.5ex>[l]^{s_2}
} $$ The product $(Y,p_1,s_1)\times(Z,p_2,s_2)$ in $\Pt_B(\C)$ has
$H \times K$ as kernel. As already explained in the proof of the
Lemma \ref{lemma:jse<=>joins}, the kernel $N$ of the coproduct
$(Y,p_1,s_1)+(Z,p_2,s_2)$ is different, in general, from $H+K$;
however, under the assumption 1, the canonical map $u \colon H+K
\to N$ is a regular epimorphism. Now, consider the following
commutative diagram, where $\alpha$ is the arrow induced on
kernels by the canonical morphism $(Y,p_1,s_1)+(Z,p_2,s_2) \to
(Y,p_1,s_1)\times(Z,p_2,s_2)$ in $\Pt_B(\C)$, $\beta$ is induced
by $(Y,p_1,s_1)+(Z,p_2,s_2) \to (G \rtimes B,p_B,i_B)$, and
$j=\ker(\alpha)$:
$$ \xymatrix{
    H \diamond K \ar@{ |>->}[r]^{\sigma_{H,K}} \ar@{-|>}[d]_v
        & H + K \ar@{-|>}[r]^{\Sigma_{H,K}} \ar@{}[dr]|(.7){\ulcorner} \ar@{-|>}[d]^u
        & H \times K \ar@{=}[d] \\
    M \ar@{ |>->}[r]^j \ar[d] & N \ar@{-|>}[r]^-\alpha \ar@{}[dr]|(.7){\ulcorner} \ar[d]^\beta
        & H \times K \ar[d] \\
    [H,K]_G \ar@{ |>->}[r] & G \ar@{-|>}[r] & G/[H,K]_G
} $$ The arrow $v \colon H \diamond K \to M$ is a regular
epimorphism, thanks to the short five lemma. The Huq commutator
$[H,K]_G$ is defined as the kernel of the pushout of
$\Sigma_{H,K}$ along $\beta u=[h,k]$. Moreover,
$G/[H,K]_G=\Coker(\beta u \sigma_{H,K})$ by composition of
pushouts, and, as $v$ is a regular epimorhism, we also have
$G/[H,K]_G=\Coker(\beta j)$.

Remembering that kernel functors preserve kernels, $M$ is the
kernel of the object in $\Pt_B(\C)$ defined as the kernel of the
arrow $(Y,p_1,s_1)+(Z,p_2,s_2) \to (Y,p_1,s_1)\times(Z,p_2,s_2)$,
so $\beta j$ is the arrow induced on kernels by an arrow in
$\Pt_B(\C)$. Now, by hypothesis 2, the kernel functors preserve
cokernels, so that $G/[H,K]_G$ turns out to be the kernel of a
cokernel in $\Pt_B(\C)$. In particular, this means that there is
an action of $B$ on $G/[H,K]_G$ induced by the one on $G$. As a
consequence, we also have an action of $B$ on $[H,K]_G$, again
because the kernel functors preserve kernels.
\end{proof}

\begin{cor} \label{cor:der.char}
Let \C\ be a semi-abelian category satisfying the following
properties:
\begin{enumerate}
    \item the kernel functors $\Ker\colon \Pt_{B}(\C)\to \C$ preserve jointly strongly epimorphic pairs;
    \item the kernel functors $\Ker\colon \Pt_{B}(\C)\to \C$ preserve cokernels.
\end{enumerate}
The derived subobject $[G,G]$ is characteristic in $G$.
\end{cor}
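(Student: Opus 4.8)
The plan is to obtain this statement as a direct specialisation of Proposition \ref{prop:huq.char}. The first step is the elementary observation that $G$ is always a characteristic subobject of itself: for any object $B$ and any action $\xi\colon B\flat G\to G$, the defining square of Definition \ref{defi:char.sub} with $h=1_G$ and $\overline\xi=\xi$ trivially commutes, so $\xi$ restricts to $1_G\colon G\rightarrowtail G$. Hence $G\car G$.

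Having this, I would apply Proposition \ref{prop:huq.char} with $H=K=G$ and $h=k=1_G$. The two hypotheses of the present corollary — that the kernel functors $\Ker\colon\Pt_B(\C)\to\C$ preserve jointly strongly epimorphic pairs and preserve cokernels — are literally those required by Proposition \ref{prop:huq.char}, so the conclusion is that the Huq commutator $[G,G]_G$ is characteristic in $G$. It then only remains to identify $[G,G]_G$ with the derived subobject $[G,G]$. This is exactly the content of the remark preceding Proposition \ref{prop:huq.char}: in the case $H=K=G$, $h=k=1_G$ the canonical map $[1,1]\colon G+G\to G$ is a regular epimorphism, so the regular image of $G\diamond G$ along it is already a normal subobject; consequently the Higgins commutator $[G,G]$ coincides with its normalisation, which is the Huq commutator $[G,G]_G$. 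Thus $[G,G]=[G,G]_G$ is characteristic in $G$.

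There is no real obstacle here: the argument is purely a matter of quoting Proposition \ref{prop:huq.char} correctly and recording the two trivial facts that $G\car G$ and that for $H=K=G$ the Higgins and Huq commutators agree. The only point deserving a word of care is making sure the notation $[G,G]$ in the statement refers to this (normal) derived subobject rather than to some a priori different construction, which is precisely why the identification $[G,G]=[G,G]_G$ is spelled out.
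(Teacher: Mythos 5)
Your proof is correct and matches the paper's intent: the corollary is stated without a separate proof precisely because it follows from Proposition \ref{prop:huq.char} applied to $H=K=G$ (which is trivially characteristic in itself), together with the observation made just before that proposition that $[G,G]$ is normal, hence coincides with the Huq commutator $[G,G]_G$. No gaps; your care in identifying $[G,G]$ with $[G,G]_G$ is exactly the right point to record.
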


\begin{cor}
Let \C\ be either a semi-abelian \LACC\ category or a
\emph{category of interest}. If $H$ and $K$ are characteristic
subobjects of $G$, then the Huq commutator $[H,K]_G$ is a
characteristic subobject of $G$.
\end{cor}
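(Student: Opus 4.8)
The plan is to deduce the statement from Proposition \ref{prop:huq.char}: in each of the two classes of categories it suffices to verify that the kernel functors $\Ker\colon\Pt_B(\C)\to\C$ preserve jointly strongly epimorphic pairs and preserve cokernels.

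For a semi-abelian \LACC\ category this has essentially been carried out in Section \ref{sec:joins}. The kernel functors are the change of base functors along the morphisms $0\to B$, so in the \LACC\ setting they have right adjoints and hence preserve all colimits; in particular they preserve cokernels, and the canonical comparison $\Ker_B Y+\Ker_B Z\to\Ker_B(Y+Z)$ is an isomorphism, so jointly strongly epimorphic pairs are preserved as well. Both hypotheses of Proposition \ref{prop:huq.char} thus hold.

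For a category of interest, preservation of jointly strongly epimorphic pairs by the kernel functors is the preservation of binary joins recalled above (from \cite{CGrayVdL2} and Lemma \ref{lemma:jse<=>joins}), so only preservation of cokernels remains. I would reduce this as follows. Given $\phi\colon(Y,q_Y,t_Y)\to(X,q_X,t_X)$ in $\Pt_B(\C)$, put $X_0=\Ker q_X$, $Y_0=\Ker q_Y$ and $\phi_0=\Ker\phi$; the subobject $\phi_0(Y_0)$ of $X_0$ is stable under the $B$-action on $X_0$, since morphisms of points restrict to morphisms of internal actions. Computing the cokernel of $\phi$ in $\Pt_B(\C)$ as the coequaliser of $\phi$ with the zero morphism, one finds that its kernel is $X_0/\widehat N$, where $\widehat N$ is the smallest $B$-stable normal subobject of $X_0$ containing $\phi_0(Y_0)$; on the other hand the cokernel of $\phi_0$ in $\C$ is $X_0/\overline{\phi_0(Y_0)}$, the ordinary normal closure in $X_0$. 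So preservation of cokernels amounts to the assertion that the normal closure in $X_0$ of a $B$-stable subobject is again $B$-stable. Using the explicit description of internal (derived) actions in a category of interest recalled in Section \ref{sec:joins}, I would check this on generators: the ``conjugation'' component $b\cdot(-)$ of the action acts by automorphisms and hence preserves normal closures just as in the classical group case, while the ``multiplicative'' components $b*(-)$ are handled by bilinearity together with axioms (1)--(8). This last bookkeeping is the main obstacle, and it must genuinely use the axioms of a category of interest, since ``kernel functors preserve cokernels'' fails for a general semi-abelian category.

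An alternative for the second case, which I find cleaner, sidesteps preservation of cokernels entirely. A category of interest satisfies the \NH\ property \cite{CGrayVdL1}, and by Corollary \ref{cor:char=>norm} the subobjects $H$ and $K$ are normal in $G$; hence $[H,K]_G$ coincides with the Higgins commutator $[H,K]$. Re-reading the proof of Proposition \ref{prop:huq.char}, up to the point where $\beta j\colon M\to G$ is identified with the restriction to kernels of a morphism $\Phi$ of $\Pt_B(\C)$, only preservation of kernels and of jointly strongly epimorphic pairs is used. Applying the kernel functors to the regular epi--mono factorisation of $\Phi$ (they preserve monomorphisms, being right adjoints, and preserve regular epimorphisms) then realises the regular image of $\beta j$ --- which equals that of $[h,k]\sigma_{H,K}$, i.e.\ $[H,K]=[H,K]_G$, because $v$ is a regular epimorphism --- as the kernel of a point over $B$, compatibly with the chosen action on $G$. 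Since the action was arbitrary, $[H,K]_G$ is characteristic in $G$.
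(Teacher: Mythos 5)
Your overall strategy is sound and, for the \LACC\ half, identical to the paper's: the paper's proof just observes that both classes of categories satisfy the two hypotheses of Proposition \ref{prop:huq.char}, the \LACC\ case being immediate because kernel functors have right adjoints, and the category-of-interest case being quoted from \cite{CGrayVdL2}. Where you differ is the category-of-interest case. Your first route --- reducing preservation of cokernels to the statement that the normal closure of a $B$-stable subobject is again $B$-stable and then ``checking on generators'' with axioms (1)--(8) --- is left genuinely unfinished: the deferred bookkeeping is precisely the nontrivial content that the paper outsources to \cite{CGrayVdL2}, so on its own this route is a sketch with a gap rather than a proof. Your second route, however, is correct and closes the case: characteristic subobjects are normal (Corollary \ref{cor:char=>norm}), categories of interest satisfy \NH\ (\cite{CGrayVdL1}), so $[H,K]_G=[H,K]$, and your image-factorisation argument is valid --- kernel functors, being computed by pullback along $0\to B$ and having left adjoints, preserve monomorphisms and regular epimorphisms, hence (regular epi, mono) factorisations, so the image of the morphism of points whose restriction to kernels is $\beta j$ exhibits $[H,K]$ (the image of $\beta j$, which equals that of $[h,k]\sigma_{H,K}$ since $v$ is a regular epi) as the kernel of a point over $B$ mapping into $G\rtimes_\xi B$, i.e.\ the arbitrary action $\xi$ restricts to $[H,K]_G$. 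This is in substance Proposition \ref{prop:hig.char} (Proposition 6.2 of \cite{CGrayVdL1}) together with the paper's own remark that under \NH\ the commutator statements follow directly; compared with the paper's proof it trades the cokernel-preservation hypothesis (and the reference \cite{CGrayVdL2}) for \NH\ (and \cite{CGrayVdL1}), and in exchange it applies verbatim to any semi-abelian category satisfying \NH\ in which kernel functors preserve jointly strongly epimorphic pairs, which covers the \LACC\ case as well.
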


\begin{proof}
This depends on the fact that both classes of categories satisfy
the conditions of Proposition \ref{prop:huq.char}.

This is obvious in the case of \LACC\ categories. For
\emph{categories of interest}, it is proved in \cite{CGrayVdL2}.
\end{proof}

An analogous result can be stated for the Higgins commutator.

\begin{prop} \label{prop:hig.char}
Let \C\ be a semi-abelian category where the kernel functors
$\Ker\colon \Pt_{B}(\C)\to \C$ preserve jointly strongly
epimorphic pairs. If $H$ and $K$ are characteristic subobjects of
$G$, then the Higgins commutator $[H,K]$ is a characteristic
subobject of $G$.
\end{prop}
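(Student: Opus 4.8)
The plan is to mimic the construction used in the proof of Proposition \ref{prop:huq.char}, but stopping at the stage where only hypothesis~1 (preservation of jointly strongly epimorphic pairs) is needed, since the Higgins commutator is a \emph{regular image} rather than a kernel and hence does not require the cokernel-preservation hypothesis. First I would fix an action $\xi \colon B \flat G \to G$. Since $H$ and $K$ are characteristic in $G$, the cospan $\xymatrix@1{H \ar@{>->}[r]^h & G & K \ar@{>->}[l]_k}$ lifts to a cospan in $\Pt_B(\C)$ with summit $(G \rtimes_\xi B, p_B, i_B)$ and feet $(Y, p_1, s_1)$, $(Z, p_2, s_2)$, exactly as in the diagram of that proof. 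Taking the coproduct $(Y,p_1,s_1)+(Z,p_2,s_2)$ in $\Pt_B(\C)$ and applying $\Ker_B$, I get an object $N = \Ker_B\big((Y,p_1,s_1)+(Z,p_2,s_2)\big)$ together with the canonical comparison $u \colon H + K \to N$, which is a regular epimorphism by hypothesis~1 (this is the content of Lemma \ref{lemma:jse<=>joins} and its proof). There is also the induced morphism $\beta \colon N \to G$ coming from $(Y,p_1,s_1)+(Z,p_2,s_2) \to (G\rtimes_\xi B, p_B, i_B)$ in $\Pt_B(\C)$, and by construction $\beta u = [h,k]$.

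Next I would form the regular image of $\beta$, i.e.\ factor $\beta$ as a regular epimorphism $N \twoheadrightarrow I$ followed by a monomorphism $I \rightarrowtail G$. The point is that $I$ coincides with the Higgins commutator $[H,K]$: since $u$ is a regular epimorphism and $\beta u = [h,k]$, the regular image of $\beta$ equals the regular image of $[h,k]$ restricted to $H\diamond K$ — more precisely, precomposing with the (normal mono) $\sigma_{H,K}\colon H\diamond K \rightarrowtail H+K$ and with $u\sigma_{H,K}$, one checks that the regular image of $\beta\circ(u\sigma_{H,K})$ equals the regular image of $[h,k]\sigma_{H,K}$, which is $[H,K]$ by definition. (Here one uses that $u$ restricts to a regular epimorphism $H\diamond K \twoheadrightarrow M$ onto the kernel $M$ of $N \to H\times K$, by the short five lemma, just as in the proof of Proposition \ref{prop:huq.char}.) Now the crucial observation: $\beta\circ(u\sigma_{H,K})$, equivalently $\beta j$ where $j=\ker(\alpha)$ and $\alpha\colon N \to H\times K$, is a morphism induced on kernels by a morphism in $\Pt_B(\C)$ — namely the kernel functor applied to the composite $\ker\big((Y,p_1,s_1)+(Z,p_2,s_2)\to(Y,p_1,s_1)\times(Z,p_2,s_2)\big)$ followed by the map to $(G\rtimes_\xi B, p_B, i_B)$.

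Finally I would invoke that kernel functors, being right adjoints, preserve regular epimorphisms (indeed all colimits that exist, but regular-epi preservation is what is used): the point $\Pt_B(\C)$-object whose kernel is $[H,K]$ is obtained as the regular image in $\Pt_B(\C)$ of the morphism $\ker\big((Y,p_1,s_1)+(Z,p_2,s_2)\to(Y,p_1,s_1)\times(Z,p_2,s_2)\big) \to (G\rtimes_\xi B,p_B,i_B)$, and applying $\Ker_B$ to this (regular epi, mono)-factorisation in $\Pt_B(\C)$ yields precisely the (regular epi, mono)-factorisation of $\beta j$ through $[H,K]$ in $\C$, because $\Ker_B$ preserves both regular epimorphisms and monomorphisms. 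Hence $[H,K]$ is the kernel of a point over $B$ through which $\xi$ factors, i.e.\ the inclusion $[H,K]\rightarrowtail G$ is the restriction to kernels of a morphism in $\Pt_B(\C)$; since $B$ and $\xi$ were arbitrary, $[H,K]$ is characteristic in $G$. The main obstacle, and the step deserving the most care, is the identification of the regular image of $\beta$ (or $\beta j$) with the Higgins commutator $[H,K]$ as subobjects of $G$: one must track precisely how $u$ interacts with $\sigma_{H,K}$ and the factorisations, and make sure that passing from $H+K$ to $N$ along the regular epi $u$ does not change the regular image — this is where the preservation of jointly strongly epimorphic pairs is really being used, and where the argument differs from (and is lighter than) the Huq case.
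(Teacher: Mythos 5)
Your argument is essentially sound and, unlike the paper, it is self-contained: the paper disposes of this proposition by simply citing Proposition 6.2 of \cite{CGrayVdL1}, whereas you rerun the construction from the proof of Proposition \ref{prop:huq.char}, observing that since the Higgins commutator is a regular image (not the kernel of a pushout) the cokernel-preservation hypothesis can be dropped and only preservation of jointly strongly epimorphic pairs is needed. The chain $[H,K]=\mathrm{Im}([h,k]\sigma_{H,K})=\mathrm{Im}(\beta j v)=\mathrm{Im}(\beta j)$ (using that $v$ is a regular epimorphism by the short five lemma), the identification of $\beta j$ as the restriction to kernels of a morphism in $\Pt_B(\C)$, and the final step of taking the (regular epi, mono) factorisation of that morphism in $\Pt_B(\C)$ and applying $\Ker_B$, all work as you describe; this buys a direct proof where the paper only offers a reference.

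Two corrections are needed, one of them substantive. First, a slip: the regular image of $\beta$ itself is not $[H,K]$ but $H\vee K$ (that is Proposition \ref{prop:join.char}); only after restricting along $j=\ker(\alpha)$, equivalently precomposing with $u\sigma_{H,K}$, do you obtain $[H,K]$, so the ``more precisely'' clause is the actual argument and should replace the opening claim. Second, your justification of the key fact in the last step is wrong: kernel functors are \emph{right} adjoints, hence preserve limits, not colimits. If they preserved ``all colimits that exist'', the hypotheses of this whole section and of Proposition \ref{prop:huq.char} would be vacuous, contradicting the counterexample recalled after Lemma \ref{lemma:jse<=>joins} (kernel functors need not even preserve binary coproducts). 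The fact you actually use --- that $\Ker_B$ preserves regular epimorphisms --- is nevertheless true in any semi-abelian category, for a different reason: if $f\colon (A,p,s)\to (A',p',s')$ is a morphism in $\Pt_B(\C)$ whose underlying arrow is a regular epimorphism, then $p=p'f$ gives $\Ker(p)=f^{-1}(\Ker(p'))$, so the induced arrow $\Ker(p)\to\Ker(p')$ is a pullback of $f$ and hence a regular epimorphism by regularity of \C. With that substitution (and the standard fact that (regular epi, mono) factorisations in $\Pt_B(\C)$ are computed as in \C), your proof is complete and correct.
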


\begin{proof}
The result is a straightforward consequence of Proposition 6.2 in
\cite{CGrayVdL1}.
\end{proof}

As a consequence, we have:

\begin{cor}
Under the assumptions of the previous proposition, if $H$ and $K$
are characteristic subobjects of an object $X$ in \C\, then the
Huq commutator and the Higgins commutator of $H$ and $K$ coincide.
\end{cor}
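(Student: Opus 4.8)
The plan is to get this essentially for free by chaining together three ingredients already available: Proposition~\ref{prop:hig.char}, Corollary~\ref{cor:char=>norm}, and the fact recalled at the start of this section that the normalisation of the Higgins commutator is the Huq commutator. The guiding observation is that the two commutators of $H$ and $K$ differ only by a normalisation step, so it suffices to show that in our situation the Higgins commutator $[H,K]$ is \emph{already} normal in $X$; then no normalisation is needed and it must coincide with $[H,K]_X$.

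Concretely, I would proceed as follows. First, invoke Proposition~\ref{prop:hig.char}: since $H$ and $K$ are characteristic in $X$ and \C\ is semi-abelian with kernel functors preserving jointly strongly epimorphic pairs (the standing hypothesis), the Higgins commutator $[H,K]$ is a characteristic subobject of $X$. Second, apply Corollary~\ref{cor:char=>norm}, which says that every characteristic subobject is normal; hence $[H,K]\triangleleft X$. Third, recall that the Huq commutator $[H,K]_X$ is the normalisation (normal closure) of $[H,K]$ in $X$; since a subobject that is already normal equals its own normal closure, we get $[H,K]=[H,K]_X$, as claimed.

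There is no real obstacle here: the only point deserving a word is the routine fact that in a semi-abelian (indeed any Mal'tsev) category the normal closure of an already normal subobject is the subobject itself, so that the characteristic-hence-normal status of $[H,K]$ is exactly what collapses the normalisation. Everything else is a direct appeal to results stated above.
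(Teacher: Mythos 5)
Your argument is correct and is exactly the one the paper intends: the corollary is stated as an immediate consequence of Proposition~\ref{prop:hig.char}, combined with Corollary~\ref{cor:char=>norm} and the fact (recalled in Section~\ref{sec:comm}) that the Huq commutator is the normalisation of the Higgins commutator, so a Higgins commutator that is already normal coincides with it. No gaps.
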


In the category of (not necessarily unitary) rings, given a ring
$X$ and two subrings $H$ and $K$, the commutator $[H, K]$ is
nothing but the subring $HK$ of $X$ generated by $H$ and $K$.
Hence Proposition \ref{prop:hig.char} says that, if $H$ and $K$
are characteristic, $HK$ also is. The same happens in the category
of Lie algebras (over a commutative ring $R$), where the
commutator $[H, K]$ of two subalgebras is the Lie subalgebra
generated by $H$ and $K$.

\begin{remark}
When the category \C\ satisfies \NH, Propositions
\ref{prop:huq.char} and \ref{prop:hig.char} are both consequences
of Proposition $3.3$ in \cite{CGrayVdL1}, where it is shown that,
in the semi-abelian context, the property
$$ H,K \mbox{ characteristic in } X \quad\Rightarrow\quad [H,K] \mbox{ characteristic in } X $$
can be deduced directly from \NH.
\end{remark}

The fact that the Huq (or the Higgins) commutator of two
characteristic subobjects is characteristic is not true in a
general semi-abelian category. Not even the derived subobject of
an object (which is the same in the Higgins or in the Huq sense)
is characteristic in general, as the following example shows.

\begin{example} \label{example:[G,G]}
Let us consider the category \NARng\ of not necessarily
associative rings, i.e.\ abelian groups with a binary operation
which is distributive w.r.t.\ the group operation. Let $G$ be the
object in \NARng\ given by the free abelian group on two
generators $G = \Z x + \Z y$, endowed with a distributive binary
operation, defined on generators as:
$$ \begin{array}{r|cc}
    * & x & y \\
    \hline
    x & x & 0 \\
    y & 0 & 0 \\
\end{array} $$
Then the derived subobject $[G,G] = \Z x$ is an ideal (i.e.\ a
normal subobject) of $G$, but it is not characteristic in $G$.
Indeed, if we consider the object given by the abelian group \Z\
with trivial multiplication, $[G,G]$ is not stable under the
following action of \Z\ over $G$:
$$ \begin{array}{lcl}
  \Z \times G \rightarrow G\,, & & z * (\alpha x + \beta y) = (z\beta)x + (z\alpha)y\,, \\
  G \times \Z \rightarrow G\,, & & (\alpha x + \beta y) * z = (z\beta)x + (z\alpha)y\,.
\end{array} $$
We emphasize that $G$ is, in fact, an associative ring, but the
present is not a counterexample for the category \Rng\ of rings,
since the one described above is an action in \NARng\, but not in
\Rng. Indeed, according to the explicit description of actions
recalled at the end of Section \ref{sec:joins}, an action of \Z\
over $G$ in \NARng\ is just a pair of bilinear maps $\Z \times G
\rightarrow G$ and $G \times \Z \rightarrow G$, while an action in
\Rng\ must also satisfy some ``associativity'' axioms. In the
example above, the axiom
\[ z * (xx) = (z * x) x \]
is not satisfied, indeed $z*(xx)=z* x=zy$, while $(z*
x)x=(zy)x=0$.
\end{example}


\section{Centres and centralisers} \label{sec:centr}

Given a characteristic subgroup $H$ of a group $G$, its
centraliser $Z_G(H)$ is characteristic, too. In particular, the
centre of a group is always a characteristic subgroup. This is not
true in any semi-abelian category, as we will show later, so we
need to consider further hypotheses on the category in order to
get this property. In a semi-abelian category \C, given a
subobject $H$ of an object $G$, the centraliser of $H$ in $G$ is
the largest subobject $Z_G(H)$ of $G$ such that the Huq commutator
$[H, Z_G(H)]_G$ vanishes. The centre of an object $G$ is the
largest subobject $Z(G)$ of $G$ such that $[G, Z(G)] = 0$.
\medskip

The centres and centralisers do not always exist in a semi-abelian
category, and even when they exist, they can be difficult to
handle. Bourn and Janelidze introduced in \cite{BJ07} a
categorical context, namely \emph{action accessible categories},
in which the centres and the centralisers have an easy
description. We recall now the definition of action accessible
categories and their basic properties.
\medskip

Let $\C$ be a semi-abelian category. Fixed an object $K \in \C$, a
split extension with kernel $K$ is a diagram
$$ \xymatrix@=6ex{K \ar[r]^k  & A \ar@<2pt>[r]^p & B \ar@<2pt>[l]^s} \,, $$
such that $ps = 1_B$ and $k = \Ker(p)$. We denote such a split
extension by $(B, A, p, s, k)$. Given another split extension $(D,
C, q, t, l)$ with the same kernel $K$, a morphism of split
extensions
\begin{equation}
(g, f) \colon (B, A, p, s, k) \ \longrightarrow \ (D, C, q, t, l)
\end{equation}
is a pair $(g, f)$ of morphisms:
\begin{equation} \label{diag}
\begin{aligned}
\xymatrix@=6ex{
    K \ar[r]^k \ar@{=}[d] & A \ar[d]_f \ar@<2pt>[r]^p & B \ar[d]^g \ar@<2pt>[l]^s \\
    K \ar[r]^l & C \ar@<2pt>[r]^q & D \ar@<2pt>[l]^t
}
\end{aligned}
\end{equation}
such that $l = f k$, $q f = g p$ and $f s = t g$. Let us notice
that, since the  category $\C$ is protomodular, the pair $(k, s)$
is jointly (strongly) epimorphic, and then the morphism $f$ in
(\ref{diag}) is uniquely determined by $g$.

The split extensions with fixed kernel $K$ form a category,
denoted by $\mathsf{SplExt}_{\C}(K)$, or simply by
$\mathsf{SplExt}(K)$.

\begin{defi}[\cite{BJ07}]
\begin{itemize}
    \item []
    \item An object in $\mathsf{SplExt}(K)$ is said to be \emph{faithful} if any object
     in $\mathsf{SplExt}(K)$ admits at most one morphism into it.
    \item Split extensions with a morphism into a faithful one are called \emph{accessible}.
    \item If, for any $K \in \C$, every object in $\mathsf{SplExt}(K)$ is accessible, we say that the category $\C$ is \emph{action accessible}.
\end{itemize}
\end{defi}

In the case of groups, faithful extensions are those inducing a
group action of $B$ on $K$ (via conjugation in $A$) which is
faithful. Every split extension in $\Gp$ is accessible and a
morphism into a faithful one can be performed by taking the
quotients of $B$ and $A$ over the centraliser $Z_B(K)$, i.e.\ the
(normal) subobject of $A$ given by those elements of $B$ that
commute in $A$ with every element of $K$.
\medskip

In \cite{Montoli} it is proved that any category of interest in
the sense of \cite{Orzech} is action accessible. Examples of
action accessible categories are then groups, rings, associative
algebras, Lie algebras, Leibniz algebras and Poisson algebras, as
mentioned before.
\medskip

In the context of action accessible categories it is easy to
describe the centraliser of a normal subobject. We give now a
brief description of the construction, without proof (that can be
found, for example, in \cite{CiMa12}). Let $x \colon X \to A$ be a
normal subobject of $A$, and let $R[p]$ be the equivalence
relation on $A$ induced by $X$ (i.e. the kernel pair of the
quotient $p \colon A \to A/X$). Consider the following morphism of
split extensions, where the codomain is a faithful one (it exists
because the category is action accessible):
\[ \xymatrix{
    X \ar@{=}[d] \ar[r]^-{\langle x, 0 \rangle} & R[p] \ar@<2pt>[r]^-{r_0} \ar[d]_f
        & A \ar@<2pt>[l]^-{s_0} \ar[d]^g \\
    X \ar[r]_k & C \ar@<2pt>[r]^-q & D. \ar@<2pt>[l]^-t
} \] Then the kernel of $g$ is the centraliser $Z_A(X)$ of $X$ in
$A$. This implies, in particular, that in an action accessible
category the centraliser of a normal subobject is normal
\cite[Corollary 2.6]{CiMa12}, which is not always the case in
general semi-abelian categories, even when $Z_A(X)$ exists (see
examples in \cite{CiMa12}).
\medskip

We are now ready to prove that, in the context of action
accessible categories, the centraliser of a characteristic
subobject is characteristic.

\begin{lemma}[\cite{CGrayVdL1}] \label{lemma:lift.split}
Consider a split extension as in the bottom row of the diagram
$$ \xymatrix{
    K \ar@{ |>->}[d]_-{k} \ar@{-->}[r] & K' \ar@{-->}[d] \ar@{-->}@<.5ex>[r]
        & Z \ar@{==}[d] \ar@{-->}@<.5ex>[l] \\
    X \ar@{ |>->}[r]_-{x} & Y \ar@<.5ex>[r]^-{f} & Z \ar@<.5ex>[l]^-{s}
} $$ such that $x k$ is normal. Then this split extension lifts
along $k\colon K\to X$ to yield a normal monomorphism of split
extensions.
\end{lemma}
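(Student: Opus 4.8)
The plan is to build the lifted split extension by hand from the normality hypothesis and then to recognise the resulting comparison morphism as a kernel inside the fibre $\Pt_Z(\C)$, so that it is automatically a normal monomorphism of split extensions. Conceptually this is the construction underlying \cite[Proposition 6.2.1]{Borceux-Bourn} carried out in $\Pt_Z(\C)$.

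First, the hypothesis that $xk$ is normal says precisely that $K$ is a normal subobject of $Y$; let $q\colon Y\to Y/K$ denote the corresponding quotient. Since $k\colon K\to X$, the subobject $xk$ of $Y$ is contained in $X=\Ker f$, so $f$ factors through $q$ as $f=\bar f\, q$ for a unique $\bar f\colon Y/K\to Z$, and $\bar s:=q s$ is a section of $\bar f$. Thus $q$ is a morphism $(Y,f,s)\to (Y/K,\bar f,\bar s)$ in $\Pt_Z(\C)$.

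Next, I would take $(K',f',s')$ to be the kernel of $q$ in $\Pt_Z(\C)$; concretely, $K'$ is the pullback of $q$ along $\bar s$, that is, the subobject $q^{-1}(\bar s(Z))$ of $Y$, with $f'$ the restriction of $f$ and $s'$ the corestriction of $s$ (which lands in $K'$ since $qs=\bar s$). Being a kernel in $\Pt_Z(\C)$, the induced monomorphism $(K',f',s')\rightarrowtail (Y,f,s)$ is a normal monomorphism of split extensions, and it is the identity on the $Z$-component; moreover $s'$ splits $f'$, so $f'$ is a split epimorphism and $(K',f',s')$ is genuinely a split extension.

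It remains to check that the morphism induced on kernels is exactly $k$, i.e.\ that $\Ker f'=X\wedge K'=K$. Since $\C$ is protomodular and $\bar s$ splits $\bar f$, whose kernel is $X/K$ (using $K\leq X$), we have $X/K\wedge \bar s(Z)=0$ in $Y/K$; pulling this back along $q$ inside $Y$ yields $X\wedge K'=X\wedge q^{-1}(\bar s(Z))=\Ker(q|_X)=K$. Hence $\Ker f'=K$ and the comparison on kernels is $k$, which completes the lift. The only delicate point is this last identification $X\wedge K'=K$: one needs the trivial intersection $X/K\wedge \bar s(Z)=0$ to pull back along $q$ to exactly $K$ and not to something larger, which is where protomodularity does the work (a splitting meets the kernel trivially, and $q|_X$ has kernel $K$). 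Everything else — the factorisation of $f$ through $q$, the point structures on $Y/K$ and on $K'$, and normality of the monomorphism in $\Pt_Z(\C)$ — is a routine diagram chase valid in any semi-abelian category.
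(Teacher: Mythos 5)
Your proof is correct, but it takes a genuinely different route from the paper's. The paper builds the lift as a pullback of split extensions: it takes the denormalisation $R$ of $xk$ (the equivalence relation on $Y$ whose zero-class is $K$), regards $K\rightarrowtail R\rightleftarrows Y$ as a point over $Y$ mapping into $(Y\times Y,\pi_1,\langle 1_Y,1_Y\rangle)$, and pulls this monomorphism of points back along the morphism $(x,\langle sf,1_Y\rangle,s)$ from the given split extension; normality of the resulting monomorphism is inherited by pullback-stability. You instead stay in the fibre $\Pt_Z(\C)$: since $xk$ is normal and $xk\le \ker f$, the cokernel $q\colon Y\to Y/K$ underlies a morphism of points over $Z$, and you take its kernel there, so normality of the comparison is automatic (kernels are normal monomorphisms); the two constructions produce the same subobject $K'=q^{-1}(\bar s(Z))$ of $Y$. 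The point you flag, namely $X\wedge K'=K$, is indeed the only thing to verify, and it can be settled even more directly than by saturation of $X$ under $q$: if $w\le X\wedge K'$ then $qw$ factors through $\bar s$, say $qw=\bar s v$, whence $v=\bar f q w=fw=0$, so $qw=0$ and $w$ factors through $\ker q=xk$, i.e.\ $w\le K$; the reverse inclusion is clear. (Your appeal to $q^{-1}(q(X))=X$ is also fine, by the short five lemma, since $K\le X$.) As for what each approach buys: yours makes the normality conclusion immediate and matches the kernel-of-cokernel pattern already used in Propositions \ref{prop:act.quot} and \ref{prop:R.char}, but it relies on forming the cokernel of $xk$ and on a normal monomorphism being the kernel of its cokernel, so it is genuinely semi-abelian; the paper's denormalisation-and-pullback argument uses only finite limits and Bourn's denormalisation, so it remains valid in weaker (pointed protomodular or homological) settings where such quotients need not exist.
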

\begin{proof}
The needed lifting is obtained via the pullback of split
extensions in the diagram
\[
\xymatrix@!0@R=3.5em@C=5em{ & K \skewpullback \ar@{=}[dl] \ar@{{
|>}->}[ddd]|(.33){\hole}_(.6){k} \ar@{{ |>}-->}[rr] && K'
\skewpullback \ar[ld] \ar@{-->}[ddd] \ar@{-->}@<0.5ex>[rr] & & Z \skewpullback \ar@{==}[ddd] \ar[dl]_{s} \ar@{-->}@<0.5ex>[ll] \\
K \ar@{{ |>}->}[rr] \ar@{{ |>}->}[ddd]_{x k} & & R
\ar@<0.5ex>[rr]^(0.75){r_{1}}
\ar[ddd]_(.4){\langle r_{1},r_{2}\rangle } & & Y \ar@<0.5ex>[ll] \ar@{=}[ddd] &\\
& & & & &\\
& X \ar@{{ |>}->}[rr]_(.25){x}|-{\hole} \ar@{{ |>}->}[dl]^(.4){x}
& &
Y\ar@<0.5ex>[rr]^(.25){f}|-{\hole}\ar[dl]^(.3){\langle s f,1_{Y}\rangle}& & Z \ar@<0.5ex>[ll]^(.75){s}|-{\hole} \ar[dl]^(.4){s}\\
Y \ar@{{ |>}->}[rr]_{\langle 0,1_{Y}\rangle} & & Y\times Y
\ar@<0.5ex>[rr]^{\pi_1} & & Y \ar@<0.5ex>[ll]^{\langle
1_{Y},1_{Y}\rangle}}
\]
where $R$ is the denormalisation (\cite{Bourn2000, Borceux-Bourn})
of $x k$.
\end{proof}

\begin{lemma} \label{lemma:centr.intersection}
Let \C\ be a semi-abelian category where, for every normal
subobject $H \triangleleft G$, the centraliser $Z_G(H)$ of $H$ in
$G$ is normal in $G$. Then if $G'$ is a normal subobject of $G$,
$Z_{G'}(H)$ is also normal in $G$.
\end{lemma}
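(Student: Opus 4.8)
The plan is to identify the centraliser $Z_{G'}(H)$ inside $G$ explicitly, namely to prove
\[ Z_{G'}(H) \;=\; Z_G(H)\wedge G', \]
and then to invoke the fact that a finite intersection of normal subobjects is again normal. Indeed, once the displayed equality is available, $Z_G(H)$ is normal in $G$ by hypothesis, $G'$ is normal in $G$ by assumption, and $Z_G(H)\wedge G'$ is the kernel of the comparison morphism $G\to G/Z_G(H)\times G/G'$, hence normal in $G$. (I read the statement with $H$ a normal subobject of $G$ contained in $G'$, so that the centralisers involved make sense and exist — as they do in the action accessible setting, which is the one of interest here.)

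The key point behind the displayed equality is that ``cooperation'' of two subobjects is insensitive to the ambient object in which they are considered, as long as it contains them. Precisely, I would show that for every subobject $W\rightarrowtail G'$ one has $[H,W]_{G'}=0$ if and only if $[H,W]_G=0$, that is, a cooperator $H\times W\to G'$ (restricting to the inclusions of $H$ and of $W$ into $G'$) exists exactly when a cooperator $H\times W\to G$ does. One implication is trivial: postcompose a cooperator into $G'$ with $G'\rightarrowtail G$. For the converse I would use that \C\ is unital (being semi-abelian), so the pair $(\langle 1,0\rangle\colon H\to H\times W,\ \langle 0,1\rangle\colon W\to H\times W)$ is jointly strongly epimorphic; given a cooperator $\varphi\colon H\times W\to G$, both composites $\varphi\langle 1,0\rangle$ and $\varphi\langle 0,1\rangle$ factor through the monomorphism $G'\rightarrowtail G$ (they are the inclusions of $H$ and of $W$ into $G'$ followed by $G'\rightarrowtail G$), so by a pullback argument $\varphi$ itself factors through $G'\rightarrowtail G$, and the induced map $H\times W\to G'$ is the required cooperator in $G'$.

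Granting this equivalence, the equality $Z_{G'}(H)=Z_G(H)\wedge G'$ follows by two inclusions. On one hand, $Z_G(H)\wedge G'$ is a subobject of $Z_G(H)$, hence it cooperates with $H$ in $G$ (restrict the cooperator of $Z_G(H)$ and $H$ along $H\times(Z_G(H)\wedge G')\rightarrowtail H\times Z_G(H)$); being contained in $G'$, it then cooperates with $H$ in $G'$ as well, so it is below the largest such subobject, $Z_{G'}(H)$. On the other hand, $Z_{G'}(H)$ cooperates with $H$ in $G'$, hence in $G$, so $Z_{G'}(H)\leq Z_G(H)$; being also a subobject of $G'$, we get $Z_{G'}(H)\leq Z_G(H)\wedge G'$.

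The step I expect to be the only delicate one is the converse direction of the cooperator equivalence — more specifically, the assertion that a morphism into $G$ whose composites with a jointly strongly epimorphic pair both factor through a subobject must itself factor through that subobject. This is exactly where unitality of \C\ enters, and it has to be set up with some care (via the pullback of $G'\rightarrowtail G$ along $\varphi$, which receives the jointly strongly epimorphic pair and is therefore an isomorphism onto $H\times W$). The remaining ingredients — monotonicity of cooperation under passage to subobjects, and normality of intersections of normal subobjects — are routine in any semi-abelian category.
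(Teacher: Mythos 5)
Your proposal is correct and follows essentially the same route as the paper: both identify $Z_{G'}(H)$ with the intersection $Z_G(H)\wedge G'$ and conclude by normality of finite intersections of normal subobjects. The cooperator argument you spell out (using that $(\langle 1,0\rangle,\langle 0,1\rangle)$ is jointly strongly epimorphic to show a cooperator into $G$ landing in $G'$ factors through $G'$) is precisely the detail the paper leaves implicit, and it is set up correctly.
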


\begin{proof}
By definition of centraliser, $Z_{G'}(H)$ is the largest subobject of $G'$ such
that $[H,Z_{G'}(H)]_{G'}=0$. Hence, it is contained in both $G'$
and $Z_{G}(H)$, and it is the largest with this property, so it
is defined by the following pullback:
$$ \xymatrix{
    Z_{G'}(H) \ar@{ |>->}[r] \ar@{ |>->}[d] & Z_{G}(H) \ar@{ |>->}[d] \\
    G' \ar@{ |>->}[r] & G
} $$ In other words, $Z_{G'}(H) = Z_{G}(H) \wedge G'$ and it is
normal in $G$ as intersection of two normal subobjects.
\end{proof}

\begin{prop} \label{prop:centr.char}
Let \C\ be a semi-abelian category where, for every normal
subobject $H \triangleleft G$, the centraliser $Z_G(H)$ of $H$ in
$G$ is normal in $G$. Then if $H$ is a characteristic subobject of
$G$, $Z_G(H)$ is also characteristic in $G$.
\end{prop}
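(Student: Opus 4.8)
The plan is to exploit the reformulation of characteristic subobjects in terms of split extensions (the proposition following Definition \ref{defi:char.sub}) together with Lemmas \ref{lemma:centr.intersection} and \ref{lemma:lift.split}. First I would note that, by Corollary \ref{cor:char=>norm}, $H$ is normal in $G$, so $Z_G(H)$ is well defined. Then I would fix an arbitrary action $\xi\colon B\flat G\to G$ and pass to the associated split extension $G\overset{i_G}{\rightarrowtail} A\xrightarrow{p_B}B$ with $A=G\rtimes_\xi B$; proving that $Z_G(H)$ is characteristic in $G$ then amounts to producing, for each such $\xi$, a split extension with kernel $Z_G(H)$ together with a morphism of split extensions into $G\rightarrowtail A\rightarrow B$ that is $1_B$ on cokernels and the inclusion $Z_G(H)\rightarrowtail G$ on kernels.

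The second step is to locate $Z_G(H)$ as a normal subobject of $A$. Since $H$ is characteristic in $G$ and $G$ is normal in $A$ (being $\ker p_B$), Proposition \ref{prop:normality.lifts} shows that $H$ is normal in $A$; the standing hypothesis then gives that the centraliser $Z_A(H)$ of $H$ in $A$ is normal in $A$. Applying Lemma \ref{lemma:centr.intersection} inside $A$, with the normal subobject $G\triangleleft A$ in the role of $G'$, yields $Z_G(H)=Z_A(H)\wedge G$, whence $Z_G(H)$ is normal in $A$; in particular the composite $Z_G(H)\rightarrowtail G\overset{i_G}{\rightarrowtail}A$ is a normal monomorphism.

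The final step is to invoke Lemma \ref{lemma:lift.split} with $X=G$, $Y=A$, $Z=B$, section $i_B$, and with $k$ the inclusion $Z_G(H)\rightarrowtail G$: because $Z_G(H)\rightarrowtail A$ is normal, the split extension $G\rightarrowtail A\rightarrow B$ lifts along $k$ to a (normal) monomorphism of split extensions with kernel $Z_G(H)$ and cokernel $B$, which is exactly the data demanded by the split-extension reformulation. Since $\xi$ was arbitrary, this establishes that $Z_G(H)$ is characteristic in $G$. I expect the main obstacle to be the middle step — establishing normality of $Z_G(H)$ inside the semidirect product $A$ — since this is where both the hypothesis on centralisers (used in $A$, not in $G$) and the strengthening from ``$H$ normal in $G$'' to ``$H$ normal in $A$'' (which genuinely uses that $H$ is characteristic, not merely normal) come into play; once $Z_G(H)$ is known to be normal in $A$, the conclusion follows directly from the quoted lemmas.
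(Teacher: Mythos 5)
Your proposal is correct and follows essentially the same route as the paper: fix an action $\xi$, pass to $A=G\rtimes_\xi B$, use Proposition \ref{prop:normality.lifts} to get $H\triangleleft A$ and Lemma \ref{lemma:centr.intersection} (with $G'=G$ inside $A$) to get $Z_G(H)\triangleleft A$, and then lift the split extension along $Z_G(H)\rightarrowtail G$ via Lemma \ref{lemma:lift.split}. Your version merely makes explicit the intersection $Z_G(H)=Z_A(H)\wedge G$ that the paper leaves implicit in its appeal to Lemma \ref{lemma:centr.intersection}.
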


\begin{proof}
Consider an object $B$ and an action $\xi \colon B \flat G \to G$.
$G$ is a normal subobject of $G \rtimes_\xi B$; so, being
characteristic in $G$, $H$ is normal in $G \rtimes_\xi B$  by
Proposition \ref{prop:normality.lifts}. Hence, by Lemma
\ref{lemma:centr.intersection}, $Z_G(H)$ is a normal subobject of
$G \rtimes_\xi B$. Now, we can apply Lemma \ref{lemma:lift.split}
to the following situation:
$$ \xymatrix{
    & Z_G(H) \ar@{ |>->}[d] \ar@{ |>->}[dr] \\
    0 \ar[r] & G \ar@{ |>->}[r]_-{i_G} & G \rtimes_\xi B \ar@<.5ex>[r]^-{p_B} & B \ar@<.5ex>[l]^-{i_B} \ar[r] & 0
} $$ thus obtaining a morphism of split extensions:
$$ \xymatrix{
    0 \ar[r] & Z_G(H) \ar@{ |>->}[d] \ar@{ |>->}[r] & Z_{G \rtimes_{\xi} B}(G) \ar@{ |>->}[d] \ar@<.5ex>[r]
        & B \ar@{=}[d] \ar@<.5ex>[l] \ar[r] & 0 \\
    0 \ar[r] & G \ar@{ |>->}[r]_-{i_G} & G \rtimes_\xi B \ar@<.5ex>[r]^-{p_B} & B \ar@<.5ex>[l]^-{i_B} \ar[r] & 0
} $$ which gives the desired action $\xi' \colon B \flat Z_G(H)
\to Z_G(H)$ as a restriction of the action $\xi$.
\end{proof}

\begin{cor} \label{cor:centre.char}
Let \C\ be a semi-abelian category where, for every normal
subobject $H \triangleleft G$, the centraliser $Z_G(H)$ of $H$ in
$G$ is normal in $G$. Then the centre $Z(G)$ is a characteristic
subobject of $G$.
\end{cor}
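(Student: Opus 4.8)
The statement to prove is Corollary \ref{cor:centre.char}: in a semi-abelian category where centralisers of normal subobjects are normal, the centre $Z(G)$ is characteristic in $G$.

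The plan is straightforward: the centre $Z(G)$ is by definition the centraliser of $G$ in itself, i.e. $Z(G) = Z_G(G)$. Since $G$ is trivially a characteristic subobject of itself (the identity $1_G \colon G \rightarrowtail G$ is invariant under every action — for any action $\xi \colon B \flat G \to G$ it restricts along $1_G$ to itself), Proposition \ref{prop:centr.char} applies with $H = G$, yielding that $Z_G(G) = Z(G)$ is characteristic in $G$.

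So the proof is essentially a one-line invocation of Proposition \ref{prop:centr.char}. The only thing worth spelling out is that $G$ is characteristic in $G$; this is immediate from Definition \ref{defi:char.sub}, since the restriction diagram commutes with $\overline{\xi} = \xi$ and $h = 1_G$. There is no real obstacle here — the corollary is a trivial specialisation of the preceding proposition. I would write:

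\begin{proof}
It suffices to observe that $Z(G) = Z_G(G)$ is the centraliser of the characteristic subobject $1_G \colon G \rightarrowtail G$, so the result follows from Proposition \ref{prop:centr.char}.
\end{proof}
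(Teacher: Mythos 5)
Your proof is correct and is exactly the intended argument: the paper states Corollary \ref{cor:centre.char} without proof as an immediate specialisation of Proposition \ref{prop:centr.char}, and your observation that $Z(G)=Z_G(G)$ with $G$ trivially characteristic in itself (take $\overline{\xi}=\xi$, $h=1_G$ in Definition \ref{defi:char.sub}) is precisely the specialisation needed.
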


In the category of (not necessarily unitary) rings, given an ideal
$H$ of a ring $G$, the centraliser $Z_G(H)$ is the annihilator of
$H$ in $G$, i.e.
\[ Z_G(H) = \{ g \in G \ | \ gh = hg = 0 \ \text{for all} \ h \in
H \}. \] Hence, if $H$ is characteristic in $G$, then the
annihilator of $H$ in $G$ is characteristic, as well. In
particular, for any ring $G$, the annihilator of $G$ is a
characteristic ideal of $G$. The same happens in the category of
Lie algebras over a commutative ring $R$. \medskip

Proposition \ref{prop:centr.char} and Corollary
\ref{cor:centre.char} are true, in particular, in semi-abelian
action accessible categories. However, they do not hold in any
semi-abelian category. The following is a counterexample.

\begin{example}
Let us consider again the category \NARng\ of not necessarily
associative rings and the object $G$ in \NARng\ described in
Example \ref{example:[G,G]}. The centre $Z(G) = \Z y$ is an ideal
(i.e.\ a normal subobject) of $G$, but it is not characteristic in
$G$, since it is not stable under the action of \Z\ over $G$
described in the same example.
\end{example}


\section{Induced morphisms between actors} \label{sec:actors}

In the category $\Gp$ of groups, if $H$ is a characteristic
subgroup of $G$, then there are induced morphisms $\Aut(G) \to
\Aut(H)$ and $\Aut(G) \to \Aut(G/H)$. This comes from the fact
that actions on $G$ (which are equivalent to split extensions with
kernel $G$, as already observed) are represented by the
automorphism group $\Aut(G)$, in the sense that an action of a
group $B$ on $G$ can be described simply as a group homomorphism
$B \to \Aut(G)$. We are going to show that the same induced
morphisms exist in a context in which internal actions (which are
equivalent to split extensions in a semi-abelian category) are
representable. Categories in which this happens are called
\emph{action representative} \cite{BJK2,Borceux-Bourn-SEC}. We now
recall the definition of an action representative category.

\begin{defi}[\cite{BJK2}]
A semi-abelian category \C\ is action representative if, for any
object $X \in \C$, there exists an object $\Act(X)$, called the
\emph{actor} of $X$, and a split extension
\[ \xymatrix{ X \ar[r] & X \rtimes \Act(X) \ar@<2pt>[r] & \Act(X) \ar@<2pt>[l] } \,, \]
called the \emph{split extension classifier} of $X$, such that,
for any split extension with kernel $X$:
\[ \xymatrix{ X \ar[r]^k & A \ar@<2pt>[r]^p & B \ar@<2pt>[l]^s } \]
there exists a unique morphism $\varphi \colon B \to \Act(X)$ such
that the following diagram commutes:
\[ \xymatrix{
    X \ar[r]^k \ar@{=}[d] & A \ar[d]_{\varphi_1} \ar@<2pt>[r]^p & B \ar@<2pt>[l]^s \ar[d]^{\varphi} \\
    X \ar[r] & X \rtimes \Act(X) \ar@<-2pt>[r] & \Act(X) \ar@<-2pt>[l]
} \] where the morphism $\varphi_1$ is uniquely determined by
$\varphi$ and the identity on $X$ (since $k$ and $s$ are jointly
strongly epimorphic).
\end{defi}

Examples of action representative categories are the category
$\Gp$ of groups, where the actor is the group of automorphisms,
and the category $\Lie$ of Lie algebras over a commutative ring
$R$, where the actor of a Lie algebra $X$ is the Lie algebra
$\Der(X)$ of derivations of $X$.
\medskip

It is well-known that the assignment $G \mapsto \Act(G)$ is not
functorial. Nevertheless, it behaves well with respect to
characteristic subobjects.

\begin{prop} \label{prop:mor.act}
Let \C\ be a semi-abelian action representative category. Every
characteristic subobject $h \colon H \rightarrowtail G$ induces a
morphism between split extension classifiers:
\begin{equation} \label{eq:map.sec}
\begin{aligned}
\xymatrix{
    G \ar@{ |>->}[r] \ar@{-|>}[d]_q & G \rtimes \Act(G) \ar@<.5ex>[r] \ar[d] & \Act(G) \ar@<.5ex>[l] \ar[d] \\
    G/H \ar@{ |>->}[r] & G/H \rtimes \Act(G/H) \ar@<.5ex>[r] & \Act(G/H) \ar@<.5ex>[l]
}
\end{aligned}
\end{equation}
and a morphism between actors: $\Act(G) \rightarrow \Act(H)$.
\end{prop}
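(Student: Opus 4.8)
The plan is to exploit the defining (universal) property of the split extension classifier twice, once for the quotient and once for the subobject. Both constructions rely on the fact, already established, that characteristic subobjects behave well under actions: by Proposition~\ref{prop:act.quot} every action on $G$ induces an action on $G/H$, and by the very definition of characteristic subobject (together with Corollary~\ref{cor:char=>norm}) every action on $G$ restricts to an action on $H$.

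\begin{proof}
We first build the morphism between split extension classifiers. Apply the universal property of $\Act(G)$ to the split extension $\xymatrix@1{G \ar[r] & G \rtimes \Act(G) \ar@<.5ex>[r] & \Act(G) \ar@<.5ex>[l]}$ viewed, via the associated action $\xi$, as an action of $\Act(G)$ on $G$. Since $H$ is characteristic in $G$, Proposition~\ref{prop:act.quot} yields an induced action $\widetilde{\xi}$ of $\Act(G)$ on $G/H$, i.e.\ a split extension with kernel $G/H$ and cokernel $\Act(G)$. By the universal property of the split extension classifier of $G/H$, this split extension factors uniquely through $\xymatrix@1{G/H \ar[r] & G/H \rtimes \Act(G/H) \ar@<.5ex>[r] & \Act(G/H) \ar@<.5ex>[l]}$, producing the map $\Act(G) \to \Act(G/H)$ and, by functoriality of semidirect products in $\Pt_B(\C)$ along such a comparison, the middle vertical arrow $G \rtimes \Act(G) \to G/H \rtimes \Act(G/H)$. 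Compatibility with $q$ on kernels is precisely the commutativity of the diagram of Proposition~\ref{prop:act.quot}, so \eqref{eq:map.sec} commutes.

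For the morphism $\Act(G) \to \Act(H)$ we argue dually using restriction instead of quotient. Starting again from the canonical action $\xi$ of $\Act(G)$ on $G$, the hypothesis $H \car G$ provides (Definition~\ref{defi:char.sub}) a restricted action $\overline{\xi}$ of $\Act(G)$ on $H$, equivalently a split extension with kernel $H$ and cokernel $\Act(G)$ together with a morphism of split extensions to $\xymatrix@1{G \ar[r] & G \rtimes \Act(G) \ar@<.5ex>[r] & \Act(G) \ar@<.5ex>[l]}$ inducing $h$ on kernels and the identity on cokernels. Feeding this split extension with kernel $H$ into the universal property of $\Act(H)$ yields the unique comparison $\Act(G) \to \Act(H)$.

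The only point that needs care is the uniqueness/coherence of these induced maps: one should check that the comparison morphisms do not depend on incidental choices, which follows from the uniqueness clause in the definition of action representative category (the factorisation through the split extension classifier is unique). I expect this coherence check to be the main (though routine) obstacle; everything else is a direct invocation of Propositions~\ref{prop:act.quot} and the definition of characteristic subobject, combined with the representability hypothesis.
\end{proof}
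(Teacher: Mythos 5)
Your proposal is correct and follows essentially the same route as the paper: specialise to $B=\Act(G)$ with the canonical action, use Proposition~\ref{prop:act.quot} (resp.\ the definition of characteristic subobject) to get the induced action on $G/H$ (resp.\ the restricted action on $H$), and then invoke the universal property of the split extension classifier of $G/H$ (resp.\ of $H$) to obtain the comparison morphisms, the middle arrow of \eqref{eq:map.sec} arising by composing the two morphisms of split extensions. No substantive difference from the paper's argument.
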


\begin{proof}
As explained in Section \ref{sec:defi}, if $H$ is a characteristic
subobject of $G$, then, for every action $\xi \colon B \flat G \to
G$, there exists an exact sequence in $\Pt_B(\C)$:
$$ \xymatrix{
    H \ar@{ |>->}[r] \ar@{ |>->}[d]_h & Y \ar@<.5ex>[r] \ar@{ >->}[d]^\phi & B \ar@<.5ex>[l] \ar@{=}[d] \\
    G \ar@{ |>->}[r] \ar@{-|>}[d]_q & X \ar@<.5ex>[r] \ar@{-|>}[d]^\gamma & B \ar@<.5ex>[l] \ar@{=}[d] \\
    G/H \ar@{ |>->}[r] & Z \ar@<.5ex>[r] & B \ar@<.5ex>[l]
} $$ Since the category \C\ is action representative, we can
choose, in particular, $B=\Act(G)$ and the middle row to be the
split extension classifier of $G$. Thus, thanks to Proposition
\ref{prop:act.quot}, we have a morphism in $\Pt_{\Act(G)}(\C)$:
$$ \xymatrix{
    G \ar@{ |>->}[r] \ar@{-|>}[d] & G \rtimes \Act(G) \ar@<.5ex>[r] \ar[d] & \Act(G) \ar@<.5ex>[l] \ar@{=}[d] \\
    G/H \ar@{ |>->}[r] & Z \ar@<.5ex>[r] & \Act(G) \ar@<.5ex>[l]
} $$ By composing with the arrow to the split extension classifier
of $G/H$, we get the desired morphism (\ref{eq:map.sec}).

For the same reason, we also have a morphism:
$$ \xymatrix{
    H \ar@{ |>->}[r] \ar@{ |>->}[d]_h & Y \ar@<.5ex>[r] \ar@{ >->}[d] & \Act(G) \ar@<.5ex>[l] \ar@{=}[d] \\
    G \ar@{ |>->}[r] & G \rtimes \Act(G) \ar@<.5ex>[r] & \Act(G) \ar@<.5ex>[l]
} $$ The arrow from the upper split extension to the split
extension classifier of $H$ produces the morphism $\Act(G)
\rightarrow \Act(H)$.
\end{proof}

It is worth translating the above proposition in terms of internal
actions. The first assertion says that there exists a morphism
$\widetilde{q} \colon \Act(G) \to \Act(G/H)$ making the following
diagram commute:
$$ \xymatrix{
    \Act(G) \flat G \ar[r]^-{\widetilde{q}\flat q} \ar[d]_{\zeta_G} & \Act(G/H) \flat (G/H) \ar[d]^{\zeta_{G/H}} \\
    G \ar@{-|>}[r]_-q & G/H
} $$ where $\zeta_G$ and $\zeta_{G/H}$ are the canonical actions
of the actors. On the other hand, the second statement says that
there exists a morphism $\overline{h} \colon \Act(G) \to \Act(H)$
making this triangle commute:
$$ \xymatrix{
    \Act(G) \flat H \ar[r]^-{\overline{h}\flat 1} \ar[dr]_{\overline{\zeta_G}} & \Act(H) \flat H \ar[d]^{\zeta_H} \\
    & H
} $$ where $\overline{\zeta_G}$ is the action on $H$ induced by
$\zeta_G$ and $\zeta_H$ is the canonical action of the actor.
\medskip

Let us observe that any action representative category is action
accessible: indeed, it is easy to see that the split extension
classifier is a faithful split extension. On the other hand, the
category $\Rng$ of rings is action accessible \cite{BJ07} but not
action representative. In the case of action accessible
categories, one cannot recover the same properties described above
for action representative categories, because there can be many
faithful split extensions associated with a given one. However, as
observed in \cite{CiMa12}, there always exists a canonical
faithful split extension associated with a given one, and it has
properties analogous to the ones described above.
\medskip

Given a split extension
\[ \xymatrix{ X \ar[r]^k & A \ar@<2pt>[r]^p & B \ar@<2pt>[l]^s } \]
in a regular action accessible category, and a morphism of split
extensions with faithful codomain:
\[ \xymatrix{
    X \ar[r]^k \ar@{=}[d] & A \ar[d]_f \ar@<2pt>[r]^p & B \ar@<2pt>[l]^s \ar[d]^g \\
    X \ar[r] & C \ar@<2pt>[r]^q & D \ar@<2pt>[l]^t
} \] the canonical (regular epi, mono) factorization gives rise to
another faithful split extension:
\[ \xymatrix{
    X \ar[r]^k \ar@{=}[d] & A \ar@{->>}[d]_{e_f} \ar@<2pt>[r]^p & B \ar@<2pt>[l]^s \ar@{->>}[d]^{e_g} \\
    X \ar@{=}[d] \ar[r] & T_1 \ar@{ >->}[d]_{m_f} \ar@<2pt>[r] & T_0 \ar@{ >->}[d]^{m_g} \ar@<2pt>[l] \\
    X \ar[r] & C \ar@<2pt>[r]^q & D. \ar@<2pt>[l]^t
} \] The important fact here is that the faithful split extension
in the middle of the previous diagram does not depend on the
choice of the lower one, so it is a canonical faithful split
extension associated with $(A, B, p, s)$. The object $T_0$ is
actually the quotient $B/Z_B(X)$ of $B$ over the centraliser of
$X$ in $B$ (i.e. the largest subobject of $B$ commuting with $X$
in $A$), while $T_1$ is the quotient $A/Z_B(X)$.
\medskip

As above, let $H$ be a characteristic subobject of $G$. Then, for
every action $\xi \colon B \flat G \to G$, there exists an exact
sequence in $\Pt_B(\C)$ as in diagram (\ref{eq:cok.pt}). Let
$$ \xymatrix{
    G \ar@{ |>->}[r] \ar@{=}[d] & X \ar@<.5ex>[r] \ar@{-|>}[d] & B \ar@<.5ex>[l] \ar@{-|>}[d] \\
    G \ar@{ |>->}[r] & T_1(B,G,\xi) \ar@<.5ex>[r] & T_0(B,G,\xi) \ar@<.5ex>[l]
} $$ be the morphism onto the canonical faithful split extension
(and similarly for the induced split extensions of kernels $H$ and
$G/H$).

\begin{prop}
Let \C\ be a semi-abelian action accessible category. Every
characterstic subobject $h \colon H \rightarrowtail G$ induces a
morphism between canonical faithful split extensions:
\begin{equation} \label{eq:map.cf}
\begin{aligned}
\xymatrix{
    G \ar@{ |>->}[r] \ar@{-|>}[d]_q & T_1(B,G,\xi) \ar@<.5ex>[r] \ar[d] & T_0(B,G,\xi) \ar@<.5ex>[l] \ar[d] \\
    G/H \ar@{ |>->}[r] & T_1(B,G/H,\widetilde{\xi}) \ar@<.5ex>[r] & T_0(B,G/H,\widetilde{\xi}) \ar@<.5ex>[l]
}
\end{aligned}
\end{equation}
and a morphism: $T_0(B,G,\xi) \rightarrow
T_0(B,H,\overline{\xi})$.
\end{prop}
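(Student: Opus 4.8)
The plan is to mimic the argument of Proposition~\ref{prop:mor.act}, replacing the split extension classifier with the canonical faithful split extension produced by the (regular epi, mono)-factorisation discussed above. Since \C\ is action accessible, for every object $B$ and every action $\xi\colon B\flat G\to G$ we have the exact sequence in $\Pt_B(\C)$ of diagram~(\ref{eq:cok.pt}); by Proposition~\ref{prop:act.quot} the action $\xi$ descends to an action $\widetilde{\xi}$ on $G/H$ and restricts to an action $\overline{\xi}$ on $H$. The first step is to observe that the comparison to the canonical faithful split extension is compatible with morphisms of split extensions: if $(g,f)\colon(B,A,p,s,k)\to(B',A',p',s',k')$ is a morphism of split extensions (here with varying base, obtained by composing the vertical maps of~(\ref{eq:cok.pt}) with those of its analogue over $B'$), then the induced maps on the $T_0$'s and $T_1$'s exist and are unique, because $T_0$ and $T_1$ are, by construction, independent of the chosen faithful lift. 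Concretely, $T_0(B,G,\xi)=B/Z_B(G)$ and $T_1(B,G,\xi)=(G\rtimes_\xi B)/Z_B(G)$, and functoriality here is precisely the statement that quotienting by centralisers is natural with respect to morphisms of split extensions --- this is the content recalled in the paragraph preceding the proposition.

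With that in hand, the second step produces the morphism~(\ref{eq:map.cf}). Start from the morphism of split extensions over $B$ given by the top two rows (for $G$) versus the bottom two rows (for $G/H$) of diagram~(\ref{eq:cok.pt}): this is a morphism $(1_B,\gamma)$ from the split extension with kernel $G$ to the one with kernel $G/H$, inducing $q$ on kernels. Apply the canonical-faithful-lift construction to both and use naturality from Step~1 to obtain the induced commutative square of canonical faithful split extensions; composing with $q$ on kernels gives exactly~(\ref{eq:map.cf}). For the morphism $T_0(B,G,\xi)\to T_0(B,H,\overline{\xi})$, start instead from the morphism of split extensions over $B$ given by the top two rows of~(\ref{eq:cok.pt}), i.e.\ $(1_B,\phi)$ with kernel map $h\colon H\rightarrowtail G$; again apply the canonical lift and naturality to get a morphism of canonical faithful split extensions, whose component on the bases is the desired arrow $T_0(B,G,\xi)\to T_0(B,H,\overline{\xi})$.

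The main obstacle is Step~1: making precise that the assignment $(B,A,p,s)\mapsto(B/Z_B(X),\,A/Z_B(X))$ is functorial in morphisms of split extensions, in a way that is compatible with varying the kernel $X$ along the kernel maps $h$ and $q$. One has to check that a morphism of split extensions carries $Z_B(G)$ into (the preimage of) $Z_{B'}(G/H)$, respectively into $Z_{B}(H)$, so that the quotients receive an induced map; this is where normality of centralisers in action accessible categories (used in Lemma~\ref{lemma:centr.intersection} and its surrounding discussion) and the uniqueness clause in the definition of faithful split extension do the work. Once the naturality is established, everything else is a diagram chase assembling the pieces exactly as in the proof of Proposition~\ref{prop:mor.act}, so I would keep Steps~2 brief and concentrate the write-up on Step~1.
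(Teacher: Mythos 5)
Your overall skeleton coincides with the paper's: describe the canonical faithful split extension via $T_0(B,G,\xi)=B/Z_G(B)$ and $T_1(B,G,\xi)=X/Z_G(B)$ (where $X=G\rtimes_\xi B$ and $Z_G(B)$ is the centraliser of $G$ in $B$, your $Z_B(G)$), and obtain the maps in (\ref{eq:map.cf}) and $T_0(B,G,\xi)\to T_0(B,H,\overline{\xi})$ as maps induced on cokernels over $B$. But the whole content of the proposition is precisely what you defer to ``Step 1'', and the tools you invoke for it are not the ones that make it work. The paragraph preceding the statement only asserts that, for a \emph{fixed} split extension, the middle row of the factorisation is independent of the chosen faithful codomain; it does not give any naturality of the assignment $(B,A,p,s)\mapsto (A/Z_B(X),B/Z_B(X))$ with respect to morphisms of split extensions, and certainly not when the kernel varies along $q$ or $h$. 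Likewise, the uniqueness clause in the definition of a faithful extension compares morphisms into a faithful extension \emph{with the same kernel}, so it cannot by itself produce the comparison between the faithful extensions associated with kernels $G$, $G/H$ and $H$; and Lemma \ref{lemma:centr.intersection} (normality of centralisers) plays no role here. So as written the proposal has a genuine gap: the inclusions of centralisers are asserted to follow from general principles that do not apply, rather than proved.

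What actually closes the gap is a short commutator argument, which is the paper's proof. By definition $[G,Z_G(B)]_X=0$; applying the regular epimorphism $\gamma\colon X\twoheadrightarrow Z$ of diagram (\ref{eq:cok.pt}) (which is the identity on $B$ and sends $G$ onto $G/H$) gives $[G/H,Z_G(B)]_Z=0$, whence $Z_G(B)\leq Z_{G/H}(B)$; this inclusion induces $q_0\colon B/Z_G(B)\to B/Z_{G/H}(B)$ on the cokernels over $B$, and $q_1\colon T_1(B,G,\xi)\to T_1(B,G/H,\widetilde{\xi})$ is then obtained from the morphism of short exact sequences $Z_G(B)\rightarrowtail X\twoheadrightarrow T_1(B,G,\xi)$ into $Z_{G/H}(B)\rightarrowtail Z\twoheadrightarrow T_1(B,G/H,\widetilde{\xi})$; together with $q$ on kernels this is (\ref{eq:map.cf}). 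For the second assertion, $[G,Z_G(B)]_X=0$ gives $[H,Z_G(B)]_X=0$, and since $\phi\colon Y\rightarrowtail X$ is a monomorphism the commutator already vanishes in $Y$, i.e.\ $[H,Z_G(B)]_Y=0$; hence $Z_G(B)\leq Z_H(B)$ and the induced map on cokernels over $B$ is the required $T_0(B,G,\xi)\to T_0(B,H,\overline{\xi})$. If you make these two inclusions explicit, the rest of your assembly of the diagram is fine and matches the paper.
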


\begin{proof}
As explained above, the object $T_0(B,G,\xi)$ is nothing but the
quotient $B/Z_G(B)$, and $T_1(B,G,\xi) \cong X/Z_G(B)$, and
similarly for $T_i(B,H,\overline{\xi})$ and
$T_i(B,G/H,\widetilde{\xi})$. The desired morphism
(\ref{eq:map.cf}) will be the bottom rectangle in the following
commutative diagram:
$$ \xymatrix@!0@R=3.5em@C=5em{
    & G \ar@{-|>}[dl]_q \ar@{=}[dd]|(.5){\hole} \ar@{{ |>}->}[rr]
        & & X \ar@{-|>}[ld]^(.4){\gamma} \ar@{-|>}[dd]|(.47){\hole}|(.52){\hole} \ar@<0.5ex>[rr]
        & & B \ar@{-|>}[dd] \ar@{=}[dl] \ar@<0.5ex>[ll] \\
    G/H \ar@{{ |>}->}[rr] \ar@{=}[dd] & & Z \ar@<0.5ex>[rr] \ar@{-|>}[dd] & & B \ar@<0.5ex>[ll] \ar@{-|>}[dd] \\
    & G \ar@{{ |>}->}[rr]|(.5){\hole} \ar@{-|>}[dl]^(.4){q}
        & & T_1(B,G,\xi) \ar@<0.5ex>[rr]|-{\hole} \ar@{--|>}[dl]^(.4){q_1}
        & & T_0(B,G,\xi) \ar@<0.5ex>[ll]|-{\hole} \ar@{--|>}[dl]^(.4){q_0} \\
    G/H \ar@{{ |>}->}[rr] & & T_1(B,G/H,\widetilde{\xi}) \ar@<0.5ex>[rr]
        & & T_0(B,G/H,\widetilde{\xi}) \ar@<0.5ex>[ll]
} $$ It is constructed as follows. By definition, the centraliser
$Z_G(B)$ is such that $[G,Z_G(B)]_X=0$. Composing with $\gamma$,
we also have $[G/H,Z_G(B)]_Z=0$, so that $Z_G(B) \leq Z_{G/H}(B)$,
and this induces the arrow $q_0$ between the corresponding
cokernels over $B$. On the other hand, $q_1$ is the arrow which
completes the following morphism of short exact sequences:
$$ \xymatrix{
    Z_G(B) \ar@{ |>->}[r] \ar@{ >->}[d] & X \ar@{-|>}[r] \ar@{-|>}[d]^{\gamma} & T_1(B,G,\xi) \ar@{--|>}[d]^{q_1} \\
    Z_{G/H}(B) \ar@{ |>->}[r] & Z \ar@{-|>}[r] & T_1(B,G/H,\widetilde{\xi})
} $$ To prove the second assertion, consider the morphism below in
$\Pt_B(\C)$:
$$ \xymatrix{
    H \ar@{ |>->}[r] \ar@{ |>->}[d]_h & Y \ar@<.5ex>[r] \ar@{ >->}[d]^\phi & B \ar@<.5ex>[l] \ar@{=}[d] \\
    G \ar@{ |>->}[r] & X \ar@<.5ex>[r] & B \ar@<.5ex>[l]
} $$ By definition, $[G,Z_G(B)]_X=0$ and, as a consequence,
$[H,Z_G(B)]_X=0$. Since $\phi$ is monomorphic, this implies that
$[H,Z_G(B)]_Y=0$ too, hence $Z_G(B) \leq Z_H(B)$. The morphism
$T_0(B,G,\xi) \rightarrow T_0(B,H,\overline{\xi})$ is the one
induced on the corresponding cokernels over $B$.
\end{proof}


\section{Summarising table}

We conclude this paper by displaying, in the following table, a
list of the properties of characteristic subobjects we proved. In
the second column, a categorical context is indicated for each
property to hold. In many cases, it is not the most general one;
possible extensions to wider contexts are suggested by the proofs.

{\small
\begin{center}
\begin{tabular}{ccc}
    \hline 
    \sffamily Property & \sffamily True in & \sffamily Reference \\ 
    \hline \hline
    $H \car G \quad \Rightarrow \quad H \triangleleft G$ & \C\ semi-abelian & \ref{cor:char=>norm} \\
    \hline
    $H \car K \triangleleft G \quad \Rightarrow \quad H \triangleleft G$
        & \C\ semi-abelian & \ref{prop:normality.lifts} \\
    \hline
    $H \car K \car G \quad \Rightarrow \quad H \car G$ & \C\ semi-abelian & \ref{prop:char.lifts} \\
    \hline
    $H_i \car G \quad (i \in I) \quad \Rightarrow \quad \bigwedge_{i \in I} H_i \car G$
        & \C\ semi-abelian & \ref{prop:meet.char} \\
    \hline
    $H \car G,\: B \mbox{ acts on } G \quad \Rightarrow \quad B \mbox{ acts on } G/H$
        & \C\ semi-abelian & \ref{prop:act.quot} \\
    \hline
    $\begin{array}{c}
        H \leq K \leq G,\quad H \car G,\quad K/H \car G/H \\
        \Rightarrow \quad K \car G
    \end{array}$
        & \C\ semi-abelian & \ref{prop:K.char} \\
    \hline
    $\begin{array}{c}
        H \car G,\quad (R \mbox{ kernel pair of } G \to G/H) \\
        \Rightarrow \quad R \mbox{ closed under actions on } G
    \end{array}$
        & \C\ semi-abelian & \ref{prop:R.char} \\
    \hline
    $H,K \car G \quad \Rightarrow \quad H \vee K \car G$
        &   \begin{tabular}{c}
            \C\ \LACC\ \\
            \C\ category of interest
        \end{tabular}
        & \ref{prop:join.char} \\
    \hline
    $[G,G] \car G$
        &   \begin{tabular}{c}
            \C\ \LACC\ \\
            \C\ category of interest
        \end{tabular}
        & \ref{cor:der.char} \\
    \hline
    $H,K \car G \quad \Rightarrow \quad [H,K] \car G$
        &   \begin{tabular}{c}
            \C\ \LACC\ \\
            \C\ category of interest
        \end{tabular}
        & \ref{prop:huq.char} \\
    \hline
    $Z(G) \car G$ & \C\ action accessible & \ref{cor:centre.char} \\
    \hline
    $H \car G \quad \Rightarrow \quad Z_G(H) \car G$ & \C\ action accessible & \ref{prop:centr.char} \\
    \hline
    $H \car G \quad \Rightarrow \quad \left\{ \begin{array}{l} \Act(G) \to \Act(G/H) \\ \Act(G) \to \Act(H) \end{array} \right.$
        & \C\ action representative & \ref{prop:mor.act} \\
    \hline
\end{tabular}
\end{center}
}

\end{document}